\newtheorem{lem}{Lemma}[section]
\newtheorem{thm}{Theorem}[section]
\newtheorem*{thm*}{Theorem}
\newtheorem*{result*}{Kubota and Takei \cite{KubotaTakei}, Theorem 3}
\newtheorem{prop}{Proposition}[section]
\newtheorem{cor}{Corollary}[section]
\theoremstyle{definition}
\definecolor{franceblue}{rgb}{0.19, 0.55, 0.91}
\definecolor{munsell}{rgb}{0.0, 0.5, 0.69}
\numberwithin{equation}{section}
\newcommand{\EE}{\mathbb{E}} 
\newcommand{\var}{\operatorname{Var}} 
\newcommand{\PP}{\mathbb{P}} 
\newcommand*\dotp{\mathpalette\dotp@{.5}}
\newcommand*\dotp@[2]{\mathbin{\vcenter{\hbox{\scalebox{#2}{$\m@th#1\bullet$}}}}}
\title{Asymptotic Normality of Superdiffusive Step-Reinforced Random Walks}
\author{Marco Bertenghi\footnote{Institute of Mathematics, University of Zurich, Winterthurerstrasse 190, CH-8057 Zürich, Switzerland}}
\date{\today}
\begin{document}
\maketitle
\abstract{In this article we establish for the superdiffusive regime $p \in (1/2,1)$ that the fluctuations of a general \textit{step-reinforced random walk} $\hat{S}$ around $a_n\hat{W}$, where $(a_n)_{n \in \mathbb{N}}$ is a non-negative sequence of order $n^p$ and $\hat{W}$ is a non-degenerate random variable, is Gaussian. This extends a known result by Kubota and Takei in \cite{KubotaTakei} for the \textit{elephant random walk} to the more general setting of step-reinforced random walks. Further, we provide an application of the asymptotic normality of $\hat{S}$ around $a_n \hat{W}$ to reinforced empirical processes as discussed by Bertoin in \cite{BertoinLinear}, which yields a refined Donsker's invariance principle.}
\section{Introduction} \label{section:Introduction}
In short, inspired by an article of Kubota and Takei \cite{KubotaTakei}, the purpose of the present work is to show that the fluctuations of  a (properly rescaled) \textit{random walk with step reinforcement} around its limit is Gaussian. The processes we study belong to particular class of random walks with memory that have been of increasing interest in recent years, see \cite{BertoinLinear}, \cite{BertoinUniversality}. Historically,  the so-called \textit{elephant random
walk} (ERW) has been an  important and fundamental example of a step-reinforced random walk that was originally introduced in the physics literature by Schütz and Trimper \cite{SchuetzTrimper} more than  15 years ago. In order to motivate the present work, we shall first recall the setting of the ERW and also briefly introduce its $d$-dimensional generalisation, the so-called multidimensional ERW (MERW). We then discuss some related results for the ERW before we finally introduce the framework of this article, namely step-reinforced random walks.

The ERW is a one-dimensional discrete-time nearest neighbour random walk with infinite memory, in allusion to the traditional saying that \textit{an elephant never forgets}. It can be depicted as follows: Fix some $q \in (0,1)$, commonly referred to as the \textit{memory parameter}, and suppose that an elephant makes an initial step in $\{-1,1\}$ at time $1$. At each time $n \geq 2$, the elephant recalls uniformly at random a step from its past; with probability $q$, the elephant repeats the remembered step, whereas with complementary probability of $1-q$ it makes a step in the opposite direction to the memory. The elephant is thus more likely to continue walking in the average direction it has already taken when $q>1/2$, whereas for $q<1/2$, the elephant rather tends to walk back. Notably, for $q=1/2$, the elephant has no preference and its path follows that of a simple symmetric random walk on the integer lattice $\mathbb{Z}$.

These heuristics of the ERW naturally translate into higher dimensions: The MERW is a $d$-dimensional nearest-neighbour random walk on the grid $\mathbb{Z}^d$ for some $d \geq 1$. The elephant performs an initial unit-step in $\mathbb{Z}^d$, at any later time, it selects uniformly at random a step from its past and, with probability $q \in (0,1)$, it repeats the remembered step, whereas with probability $(1-q)/(2d-1)$, it moves in one of the remaining $2d-1$ neighbours. For mathematical rigorous definitions of these processes we refer the interested reader to e.g. \cite[Section 3]{BaurBertoin} for the ERW and \cite[Section 2]{Bertenghi} for the MERW.

 The ERW has generated a lot of interest in recent years, a non-exhaustive list of references (with further references therein) is given by \cite{BaurBertoin, Bercu, BercuLaulinCenter, Bertenghi, GavaSchuetzColetti, Coletti2017, Coletti2019, GuevaraERW, KubotaTakei, Kuersten}, see also \cite{BaurClass, GonzalesMult, GuevaraMinimal}, for variations. The asymptotic behaviour after a proper rescaling of the ERW is well-understood. A striking feature that has been observed in \cite{SchuetzTrimper} is that the long-time behaviour of the ERW exhibits a phase transition for $p=1/2$. In light of said remark, it is common in the literature to distinguish between the \textit{diffusive regime} ($p < 1/2$), the \textit{critical regime} ($p=1/2$) and the \textit{superdiffusive regime} ($p>1/2$).

Notably it has been shown in the literature, see \cite[Theorem 3.7, Theorem 3.8]{Bercu}, that in the superdiffusive regime $p \in (1/2,1)$ it holds for $n \to \infty$
\begin{align} \label{firstDisplay}
\frac{S_n^E}{n^p} \to W^E \quad \text{a.s.,}
\end{align}
where $S_n^E$ denotes the position of the ERW at time $n$ and $W^E$ is a non-degenerate non-Gaussian random variable. The a.s. convergence given in (\ref{firstDisplay}) makes it natural to look for a second order weak limit result. In this direction, Kubota and Takei \cite{KubotaTakei} established that the fluctuations of $S_n^E$ around $a_n W^E$, where $a_n$ is a sequence of order $n^p$, are still Gaussian. More precisely, the following result holds:
\begin{result*} \label{thm:ThmOfKubotaTakei} Let $p \in (1/2,1)$. Then there exists a random variable $W^E$ with positive variance such that for the sequence 
\begin{align} \label{seq:CrucialSequence}
a_n =\frac{\Gamma(n+p)}{\Gamma(n)\Gamma(p+1)}, \quad n \geq 1,
\end{align}
it holds that as $n$ tends to infinity
\begin{align*}
\frac{S_n^E- a_n W^E}{\sqrt{n}} \implies \mathcal{N}(0,1/(2p-1)).
\end{align*}
\end{result*}

The treatment of Kubota and Takei is limited to the case of Rademacher distributed steps, that is the framework of the ERW. In this article we will extend the aforementioned result of Kubota and Takei to the more general framework of \textit{step-reinforced random walks} and to arbitrary dimensions.

 A step reinforced random walk is a generalisation of the ERW (respectively MERW for higher dimensions), where the distribution of a typical step of the walk is arbitrary, rather than Rademacher. Vaguely speaking, fix a parameter $p \in (0,1)$, called the \textit{reinforcement parameter}; at each discrete time, with probability $p$ a step reinforced random walk repeats one of its preceding steps chosen uniformly at random, and otherwise, with complementary probability $1-p$, it has an independent increment with a fixed but arbitrary distribution. More precisely, given an underlying probability space $( \Omega, \mathcal{F}, \PP)$ and a sequence $\mathbf{X}_1, \mathbf{X}_2, \dots $ of i.i.d. copies of a $\mathbb{R}^d$-valued random vector $\mathbf{X}$ with law $\mu$ on $\mathbb{R}^d$, all defined on the same given probability space, we define $\mathbf{\hat{X}}_1, \mathbf{\hat{X}}_2, \dots$ recursively as follows: First, let $( \varepsilon_i : i \geq 2)$ be an independent sequence of Bernoulli random variables with parameter $p \in (0,1)$.  We set first $\mathbf{\hat{X}}_1= \mathbf{X}_1$, and next for $i \geq 2$, we let 
\begin{align*}
\mathbf{\hat{X}}_i = \begin{cases} \mathbf{X}_i, & \text{if } \varepsilon_i=0, \\
\mathbf{\hat{X}}_{U[i-1]}, & \text{if } \varepsilon_i=1. \end{cases},
\end{align*}
where $U[i-1]$ denotes an independent uniform random variable on the set $\{1, \dots , i-1\}$. Finally, the sequence of the partial sums 
\begin{align*}
\mathbf{\hat{S}}_n:= \mathbf{\hat{X}}_1 + \dots + \mathbf{\hat{X}}_n, \quad n \in \mathbb{N},
\end{align*}
is referred to as a ($d$-dimensional) \textit{step-reinforced random walk}. In the special case of $d=1$, i.e. when the typical step $X$ is a real-valued random variable, we use the notation 
\begin{align*}
\hat{S}_n = \hat{X}_1 + \dots + \hat{X}_n, \quad n \in \mathbb{N}.
\end{align*} In this setting, when the typical step $X$ follows the Rademacher distribution $\mathcal{R}(1/2)$, Kürsten \cite{Kuersten} pointed out that $\hat{S}$ is a version of the ERW with memory parameter $q=(p+1)/2 \in (1/2,1)$ in the present notation, i.e. we rediscover the ERW in its superdiffusive regime. Similarly, when the typical step $\mathbf{X}$ follows an isotropic law on $\mathbb{R}^d$, then $\mathbf{ \hat{S}}$ is a version of the MERW with parameter $q \in ( (2d)^{-1},1)$.

 Plainly, the position of the step reinforced walker is given by 
\begin{align} \label{rec:defOfRRW}
\mathbf{\hat{S}}_{n+1}=\mathbf{\hat{S}}_n + \mathbf{\hat{X}}_{n+1}.
\end{align}
We henceforth assume that $\mathbf{X}=(X^1, \dots , X^d)^\text{T} \in L^2(\PP)$, meaning that all components $X^1, \dots , X^d$ of $\mathbf{X}$ are square integrable. Further, we denote the covariance matrix of $\mathbf{X}$ by $\bm{\sigma}^2= \EE[ (\mathbf{X} - \EE( \mathbf{X}))( \mathbf{X}- \EE( \mathbf{X}))^\text{T}]$. From the reinforcement algorithm described above, we have for any bounded measurable function $f: \mathbb{R} \to \mathbb{R}_+$, 
\begin{align*}
\EE(f ( \mathbf{ \hat{X}}_{n+1})) = (1-p) \EE(f( \mathbf{X}_{n+1})) + \frac{p}{n} \sum_{j=1}^n \EE( f( \mathbf{ \hat{X}}_j))
\end{align*}
which yields by induction that each reinforced step $\mathbf{\hat{X}}_n$ has law $\mu$.  In particular the map $n \mapsto \EE( \mathbf{ \hat{X}}_n)= \EE( \mathbf{X})$ is constant. Notice that if $( \mathbf{ \hat{S}}_n)_{n \in \mathbb{N}}$ is not centred, it is often fruitful to reduce our analysis to the centred case by considering $( \mathbf{ \hat{S}}_n- \EE(\mathbf{ \hat{S}}_n))_{n \in \mathbb{N}} = ( \mathbf{ \hat{S}}_n - n \EE( \mathbf{X}))_{n \in \mathbb{N}}$, which is a centred step-reinforced random walk with typical step distributed as $\mathbf{X}- \EE( \mathbf{X})$. We shall adapt the terminology used for the ERW, namely we refer to $p \in (1/2,1)$ as the superdiffusive regime.

 It is our main goal to establish the following fluctuation result:
\begin{thm}\label{thm:MyMainResult} Suppose that $\mathbf{X} \in L^2(\PP)$ and that $p \in (1/2,1)$. Let $( \mathbf{ \hat{S}}_n)_{n \geq 1}$ be a ($d$-dimensional) step-reinforced random walk. Then there exists a non-degenerate random vector $ \mathbf{\hat{W}} \in L^2( \PP)$ in $\mathbb{R}^d$ such that we have the almost sure convergence
\begin{align*}
\frac{\mathbf{ \hat{S}}_n - n \EE( \mathbf{X})}{n^p} \to \mathbf{ \hat{W}}, \quad \text{as } n \to \infty.
\end{align*}
Furthermore, let $(a_n)_{n \in \mathbb{N}}$ be as in (\ref{seq:CrucialSequence}), then there is the convergence in distribution as $n$ tends to infinity,
\begin{align*}
\frac{ \mathbf{\hat{S}}_n- n \EE( \mathbf{X}) - a_n\mathbf{\hat{W}}}{\sqrt{n}} \implies \mathcal{N}_d(\bm{0}, \bm{\sigma}^2/(2p-1)).
\end{align*}
\end{thm}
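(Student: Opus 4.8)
The plan is to exhibit $\hat{\mathbf{W}}$ as the almost sure limit of an $L^2$-bounded martingale and then to represent the fluctuations \emph{exactly} as a tail sum of martingale differences, to which a martingale central limit theorem will apply after a Cram\'er--Wold reduction to scalars. Since $(\hat{\mathbf{S}}_n - n\EE(\mathbf{X}))_{n\geq1}$ is itself a centred step-reinforced random walk with typical step $\mathbf{X}-\EE(\mathbf{X})$ and the same covariance $\bm{\sigma}^2$, we may assume $\EE(\mathbf{X})=\bm{0}$. Write $\mathcal{F}_n=\sigma(\hat{\mathbf{X}}_1,\dots,\hat{\mathbf{X}}_n)$; the reinforcement rule gives $\EE[\hat{\mathbf{X}}_{n+1}\mid\mathcal{F}_n]=\tfrac{p}{n}\hat{\mathbf{S}}_n$, and since $a_{n+1}/a_n=(n+p)/n$ the normalised walk $M_n:=\hat{\mathbf{S}}_n/a_n$ is an $(\mathcal{F}_n)$-martingale. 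A one-line second-moment recursion gives $\EE[\hat{\mathbf{S}}_n\hat{\mathbf{S}}_n^{\mathrm{T}}]=\alpha_n\bm{\sigma}^2$ with $\alpha_1=1$ and $\alpha_{n+1}=(1+\tfrac{2p}{n})\alpha_n+1$, so that $\alpha_n\sim c\,n^{2p}$ for some $c>0$ precisely because $p>1/2$; hence $(M_n)$ is bounded in $L^2$, converges almost surely and in $L^2$ to a limit $\hat{\mathbf{W}}\in L^2(\PP)$ with $\mathrm{Cov}(\hat{\mathbf{W}})=c\,\Gamma(p+1)^2\,\bm{\sigma}^2\neq\bm{0}$, hence non-degenerate. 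As $a_n\sim n^p/\Gamma(p+1)$ this yields the stated almost sure convergence, and it remains to handle the fluctuations.

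Set $\mathbf{D}_{k+1}:=\hat{\mathbf{X}}_{k+1}-\EE[\hat{\mathbf{X}}_{k+1}\mid\mathcal{F}_k]=\hat{\mathbf{X}}_{k+1}-\tfrac{p}{k}\hat{\mathbf{S}}_k$, a martingale difference. Using $a_{k+1}=\tfrac{k+p}{k}a_k$ one checks the identity $M_{k+1}-M_k=\tfrac{k}{(k+p)a_k}\mathbf{D}_{k+1}$, so telescoping the almost surely convergent series $\hat{\mathbf{W}}-M_n=\sum_{k\geq n}(M_{k+1}-M_k)$ and multiplying by $a_n$ produces
\begin{align*}
\frac{\hat{\mathbf{S}}_n-a_n\hat{\mathbf{W}}}{\sqrt{n}}\;=\;-\sum_{k=n}^{\infty}b_{n,k}\,\mathbf{D}_{k+1},\qquad b_{n,k}:=\frac{a_n}{a_k}\cdot\frac{k}{(k+p)\sqrt{n}}\;\leq\;\frac{1}{\sqrt{n}}.
\end{align*}
By Cram\'er--Wold it suffices, for each $\bm{\theta}\in\mathbb{R}^d$, to prove $\sum_{k\geq n}b_{n,k}\langle\bm{\theta},\mathbf{D}_{k+1}\rangle\implies\mathcal{N}(0,\bm{\theta}^{\mathrm{T}}\bm{\sigma}^2\bm{\theta}/(2p-1))$; and truncating the series at a level $m_n\to\infty$ with $m_n/n\to\infty$ (the discarded tail has $L^2$-norm at most $C(\sum_{k>m_n}b_{n,k}^2)^{1/2}\to0$ and is negligible by Slutsky) reduces this to a genuine martingale difference array central limit theorem, for which I intend to verify the predictable-bracket convergence and a conditional Lindeberg condition.

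The conditional Lindeberg condition is easy: $\max_{k\geq n}b_{n,k}\leq n^{-1/2}\to0$, while $\{\|\mathbf{D}_{k+1}\|^2\}_{k\geq1}$ is uniformly integrable, being dominated by the sum of the family $\{2\|\hat{\mathbf{X}}_{k+1}\|^2\}$ of copies of the single integrable variable $2\|\mathbf{X}\|^2$ and the family $\{2p^2k^{-2}\|\hat{\mathbf{S}}_k\|^2\}$, which tends to $0$ in $L^1$ because $\EE\|\hat{\mathbf{S}}_k\|^2=O(k^{2p})$ with $2p<2$. For the bracket I will first prove a law of large numbers for the empirical second moments, $\tfrac1k\sum_{j=1}^k\hat{\mathbf{X}}_j\hat{\mathbf{X}}_j^{\mathrm{T}}\to\bm{\sigma}^2$ in $L^1$: for bounded $g$ the averages $\tfrac1n\sum_{j\leq n}g(\hat{\mathbf{X}}_j)$ are driven by the martingale $c_n\bigl(\tfrac1n\sum_{j\leq n}g(\hat{\mathbf{X}}_j)-\EE g(\mathbf{X})\bigr)$ with $c_n:=\prod_{j=1}^{n-1}\tfrac{j+1}{j+p}\sim c'\,n^{1-p}$, whose predictable bracket is summable exactly because $p>1/2$, so this martingale is $L^2$-bounded and the averages converge a.s.\ and in $L^2$ to $\EE g(\mathbf{X})$; a routine truncation promotes this to $L^1$-convergence for every $g$ with $g(\mathbf{X})\in L^1$, and applying it coordinatewise to $g(\mathbf{x})=x^ix^j$ (integrable since $\mathbf{X}\in L^2$) gives the claim. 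Since
\begin{align*}
\EE[\mathbf{D}_{k+1}\mathbf{D}_{k+1}^{\mathrm{T}}\mid\mathcal{F}_k]=(1-p)\bm{\sigma}^2+\frac{p}{k}\sum_{j=1}^k\hat{\mathbf{X}}_j\hat{\mathbf{X}}_j^{\mathrm{T}}-\frac{p^2}{k^2}\hat{\mathbf{S}}_k\hat{\mathbf{S}}_k^{\mathrm{T}},
\end{align*}
with the last term $O(k^{2p-2})$ in $L^1$, this conditional covariance converges to $\bm{\sigma}^2$, and together with the elementary asymptotics $a_n/a_k\sim(n/k)^p$ and $\sum_{k\geq n}k^{-2p}\sim n^{1-2p}/(2p-1)$, which give $\sum_{k\geq n}b_{n,k}^2\to 1/(2p-1)$, one obtains $\sum_{k\geq n}b_{n,k}^2\,\EE[\mathbf{D}_{k+1}\mathbf{D}_{k+1}^{\mathrm{T}}\mid\mathcal{F}_k]\to\bm{\sigma}^2/(2p-1)$ in probability, whose $\bm{\theta}$-quadratic form is the required bracket limit. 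Feeding these into the array CLT, Slutsky's lemma and Cram\'er--Wold yields $\mathcal{N}_d(\bm{0},\bm{\sigma}^2/(2p-1))$, and undoing the centring finishes the argument. The main obstacle---and the only genuinely new point compared with Kubota--Takei's Rademacher case, where $\hat{X}_j^2\equiv1$ makes the empirical second moment constant---is establishing this $L^1$ law of large numbers for $\tfrac1k\sum_{j\leq k}\hat{\mathbf{X}}_j\hat{\mathbf{X}}_j^{\mathrm{T}}$ under nothing more than $\mathbf{X}\in L^2$, and channelling it cleanly through the predictable-bracket computation.
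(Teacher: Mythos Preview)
Your proposal is correct and shares the paper's skeleton---the same multiplicative martingale $M_n=\hat{\mathbf S}_n/a_n$, the same Cram\'er--Wold reduction, and the same need for a law of large numbers $\tfrac1k\sum_{j\le k}\hat X_j^2\to\sigma^2$ to control the predictable bracket---but the handling of unbounded steps is genuinely different. The paper invokes Heyde's tail-sum CLT (Theorem~\ref{thm:Heyde}); there condition~(b), namely $\EE[\sup_{k\ge n}d_k^2]=o(s_n^2)$, is immediate only when $\|X\|_\infty<\infty$, and likewise Corollary~\ref{cor:ImprovementOfBertoinRes} applied to $\hat V_n=\sum\hat X_j^2$ needs $X^2\in L^2$, which again is free only for bounded $X$. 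This forces the paper's two-stage architecture: prove the bounded case first, then lift to $X\in L^2$ via the somewhat delicate diagonal-subsequence truncation of Section~\ref{subsection:Truncation} and Lemma~\ref{lemma:theReductionLemma}. You instead replace Heyde's condition~(b) by a conditional Lindeberg condition for a truncated array, which follows directly from the uniform integrability of $\|\mathbf D_{k+1}\|^2$ (identically distributed $\|\hat{\mathbf X}_{k+1}\|^2$ plus an $L^1$-null remainder), and you localise the truncation to inside the LLN argument, where it is indeed routine: bounded $g$ gives $L^2$-convergence by the very same martingale, and an $\epsilon$-splitting upgrades this to $L^1$-convergence for any $g$ with $g(\mathbf X)\in L^1$. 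The payoff is that you treat general $X\in L^2$ in one pass and avoid Lemma~\ref{lemma:theReductionLemma} entirely; the cost is that the reader must keep track of an array truncation level $m_n$ and a Slutsky step, whereas the paper can cite Heyde's theorem as a black box once the bounded case is done.
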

To summarise: In comparison to the work of Kubota and Takei, we extend their result to the more general framework of step-reinforced random walks. Most notably, even in the one-dimensional case we don't assume that the steps follow a simple distribution such as Rademacher, a fact that has been crucially exploited by Kubota and Takei in their article. Our approach here relies, just as in \cite{KubotaTakei}, on a martingale closely related to $\hat{S}$ and a martingale central limit theorem due to Heyde. Since in our framework the steps of the reinforced random walk can be unbounded, in order to establish Theorem \ref{thm:MyMainResult}, we first need to work with bounded steps and then later on remove this assumption by a truncation argument reminiscent of \cite[Section 4.3]{BertoinUniversality}.

The rest of the paper is organized as follows: Section \ref{sec:Section2} is devoted to the proof of Theorem \ref{thm:MyMainResult}, which is carried out through four subsections. In Section \ref{subsection:AuxiliaryMartingale} we introduce and investigate a crucial martingale. During Sections \ref{subsection:BoundedCase} and \ref{subsection:Truncation} we carry out the main work of the proof. In Section \ref{subsection:ReductionTod1} we finalise the proof of Theorem \ref{thm:MyMainResult} by reducing the statement to the one-dimensional case. In Section \ref{section:EmpiricalProcesses} we present an application of Theorem \ref{thm:MyMainResult}, to empirical processes associated to the aforementioned reinforcement algorithm. Bertoin \cite{BertoinLinear} discusses the question of how empirical processes are affected by  reinforcement with respect to Donsker's theorem and provides invariance principles for all regimes $p \in (0,1)$. We present a refined invariance principle for the superdiffusive regime $p \in (1/2,1)$, see Corollary \ref{thm:MySecondMainResult}. 
\section{Proof of Theorem \ref{thm:MyMainResult}} \label{sec:Section2}
The purpose of this section is to establish Theorem \ref{thm:MyMainResult}. Without loss of generality we shall henceforth assume that $\mathbf{X} \in L^2(\PP)$ is centered, i.e. $\EE( \mathbf{X})= \mathbf{0}$ and $\bm{\sigma}^2= \EE( \mathbf{X} \mathbf{X}^\text{T})$ is the covariance matrix of $\mathbf{X}$. Further, for the sake of presentation, we shall always stick to the boldface notation when we discuss the situation in higher dimensions (i.e. $d \geq 2$), in this direction $\mathbf{X}$ denotes a typical step of the ($d$-dimensional) step-reinforced random walk $\mathbf{ \hat{S}}=( \mathbf{ \hat{S}}_n)_{n \geq 1}$ whereas $X$ is the typical step of the (one-dimensional) step-reinforced random walk $\hat{S}=( \hat{S}_n)_{n \geq 1}$. Similarly, $\mathbf{ \hat{W}}$ denotes a (non-degenerate) random vector in $\mathbb{R}^d$, whereas $\hat{W}$ denotes a (non-degenerate) random variable. 

In order to establish Theorem \ref{thm:MyMainResult} we will work in multiple stages, in fact, most of the work is done in dimension one and then generalised to higher dimensions at the end. More precisely: First, in Section \ref{subsection:AuxiliaryMartingale}, we introduce a crucial auxiliary martingale and investigate its properties. This martingale is essential for our cause in order to prove Theorem \ref{thm:MyMainResult}. In Section \ref{subsection:BoundedCase} we present a proof of Theorem \ref{thm:MyMainResult} in dimension one, first under the additional assumption that the typical step $X$ is bounded, later on, in Section \ref{subsection:Truncation}, this assumption is relaxed to the general case by a truncation argument. Finally, in Section \ref{subsection:ReductionTod1}, we show how Theorem \ref{thm:MyMainResult} can be reduced to the one-dimensional case with an appeal to the Cramér-Wold theorem. 

For convenience and as it is central for our purpose, we recall a Theorem by Heyde \cite{Heyde} which we frequently refer to during the proof of Theorem \ref{thm:MyMainResult}.
\begin{thm}[Heyde \cite{Heyde}, Theorem 1 (b)] \label{thm:Heyde} Suppose that $(M_n)_{n \geq 1}$ is a square-integrable martingale with mean zero. Let $d_k = M_k-M_{k-1}$ for $k=1,2, \dots ,$ where $M_0=0$ almost surely. If 
\begin{align*}
\sum_{k=1}^\infty \EE[(d_k)^2] < + \infty
\end{align*}
holds in addition, then we have the following: Let 
\begin{align*}
s_n^2:= \sum_{k=n}^\infty \EE[ (d_k)^2].
\end{align*}
\begin{enumerate}[label=\roman*)]
\item The limit $M_\infty := \sum_{k=1}^\infty d_k$ exists almost surely, and $M_n \to M_\infty$ in $L^2$. 
\item Assume that 
\begin{enumerate}
\item $\displaystyle \frac{1}{s_n^2}\sum_{k=n}^\infty (d_k)^2 \to 1$ as $n$ tends to infinity in probability and,
\item $\displaystyle \lim_{n \to \infty} \frac{1}{s_n^2}\EE \left( \sup_{k \geq n} (d_k)^2 \right) =0.$
\end{enumerate}
Then we have 
\begin{align*}
\frac{M_\infty-M_n}{s_{n+1}}= \frac{\sum_{k=n+1}^\infty d_k}{s_{n+1}} \implies \mathcal{N}(0,1), \quad \text{as } n \to \infty.
\end{align*}
\end{enumerate}
\end{thm}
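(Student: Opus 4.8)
The plan is to treat the two parts separately, with essentially all of the substance residing in part (ii).

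For part (i), I would first note that the martingale differences $d_k$ are pairwise orthogonal in $L^2$, so that $\EE[M_n^2] = \sum_{k=1}^n \EE[(d_k)^2] \le \sum_{k=1}^\infty \EE[(d_k)^2] < \infty$. Hence $(M_n)$ is an $L^2$-bounded martingale, and Doob's martingale convergence theorem produces a limit $M_\infty$ both almost surely and in $L^2$. The same orthogonality identity applied to $\EE[(M_n - M_m)^2] = \sum_{k=m+1}^n \EE[(d_k)^2]$ re-confirms the $L^2$ convergence, and it shows $s_n^2 = \EE[(M_\infty - M_{n-1})^2] \to 0$, a fact I will reuse later.

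For part (ii), the key idea is to read the normalized tail sum $T_n/s_{n+1}$, where $T_n := M_\infty - M_n = \sum_{k>n} d_k$, as the terminal value of a \emph{forward} martingale and then invoke the standard (Lindeberg–Feller type) martingale-array central limit theorem. Concretely, for each row index $n$ set $X_{n,i} := d_{n+i}/s_{n+1}$ for $i \ge 1$; then $(X_{n,i})_{i \ge 1}$ is a martingale-difference sequence for the shifted filtration $\mathcal{G}_{n,i} := \mathcal{F}_{n+i}$, the nesting $\mathcal{G}_{n,i} \subseteq \mathcal{G}_{n+1,i}$ required by the array CLT holds trivially since $\mathcal{F}_{n+i} \subseteq \mathcal{F}_{n+1+i}$, and the full row sum is exactly $T_n/s_{n+1}$. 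It then remains to verify the array hypotheses. The quadratic-variation condition $\sum_{i \ge 1} X_{n,i}^2 = s_{n+1}^{-2}\sum_{k>n}(d_k)^2 \to 1$ in probability is precisely assumption (a) read at index $n+1$; crucially the limit is the deterministic constant $1$, which is what forces a genuine $\mathcal{N}(0,1)$ limit rather than a variance mixture. The asymptotic-negligibility requirements $\max_{i \ge 1}|X_{n,i}| \to 0$ in probability and $\sup_n \EE[\max_{i \ge 1} X_{n,i}^2] < \infty$ both follow from assumption (b), since $\EE[\max_{i \ge 1} X_{n,i}^2] \le s_{n+1}^{-2}\,\EE[\sup_{k \ge n+1}(d_k)^2] \to 0$. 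Granting the CLT then yields $T_n/s_{n+1} \implies \mathcal{N}(0,1)$, which is the assertion.

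The main obstacle is that each row of the array is \emph{infinite}, whereas the off-the-shelf martingale-array CLT is typically phrased for finitely many terms per row. I would address this by truncation: for a suitably fast-growing sequence $N_n \to \infty$, apply the finite-array CLT to $T_n^{(N_n)} := \sum_{k=n+1}^{N_n} d_k$ and control the discarded tail via the $L^2$ estimate $\EE[(T_n - T_n^{(N_n)})^2]/s_{n+1}^2 = s_{N_n+1}^2/s_{n+1}^2$, which can be driven to zero because $s_m^2 \to 0$; a converging-together (Slutsky) argument then transfers the limit from $T_n^{(N_n)}/s_{n+1}$ to $T_n/s_{n+1}$. One must confirm that truncation preserves conditions (a) and (b), but this is immediate since the removed block has normalized second moment $s_{N_n+1}^2/s_{n+1}^2 \to 0$. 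An alternative, fully self-contained route that avoids any external CLT would be a direct characteristic-function computation in the spirit of McLeish, estimating $\EE[\exp(it\,T_n/s_{n+1})]$ through the product $\prod_{k>n}(1 + it\,X_{n,k})$ and using (a) and (b) to show convergence to $\exp(-t^2/2)$; I expect the bookkeeping there, rather than any conceptual point, to be the genuinely delicate part.
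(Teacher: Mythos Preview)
The paper does not prove this theorem. It is stated without proof as a quoted result from Heyde's article, introduced with the sentence ``For convenience and as it is central for our purpose, we recall a Theorem by Heyde \cite{Heyde} which we frequently refer to during the proof of Theorem~\ref{thm:MyMainResult}.'' There is therefore no ``paper's own proof'' to compare your proposal against; the theorem functions purely as a black-box tool in the subsequent arguments.

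As a standalone sketch your outline is sound and is essentially the route Heyde himself takes. Part~(i) is routine as you say. For part~(ii), recasting the tail $(M_\infty-M_n)/s_{n+1}$ as the terminal value of a triangular martingale-difference array and invoking a Lindeberg--Feller type martingale CLT is the standard mechanism, and you correctly identify the one genuine subtlety: the rows are infinite, so one must either truncate and use a Slutsky argument (your proposed fix, which works because $s_{N_n+1}^2/s_{n+1}^2\to 0$ for $N_n$ growing fast enough) or run the McLeish characteristic-function argument directly. Either route is fine; neither appears in the present paper because the result is simply cited.
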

\subsection{An auxiliary martingale} \label{subsection:AuxiliaryMartingale}
We start by proving an auxiliary result which will be essential in order to prove Theorem \ref{thm:MyMainResult}. 
\begin{prop}\label{prop:MultMartStart} Let $p \in (1/2,1)$. The process $M_n = \hat{S}_n/a_n$ for $n \geq 1$, where $(a_n)_{n \geq 1}$ is given by (\ref{seq:CrucialSequence}), is a
 martingale with respect to the filtration $\mathcal{F}_n = \sigma (\hat{X}_1, \dots , \hat{X}_n)$ with mean zero. Further, there exists a non-degenerate random variable $\hat{W}$ such that for we have $M_n \to \hat{W}$ a.s. and in $L^2(\PP)$ as $n \to \infty$.
\end{prop}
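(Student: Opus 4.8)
The plan is to check the martingale property by a single conditioning step, then to prove $L^2$-boundedness of $(M_n)_{n\ge 1}$ via a recursion for $\EE(\hat S_n^2)$, and finally to deduce both convergences and the non-degeneracy of $\hat W$ from Theorem~\ref{thm:Heyde}(i) together with the orthogonality of the martingale increments. Since we have reduced to the centred case, the identity for $\EE(f(\hat X_{n+1}))$ recorded before the statement, used conditionally, gives $\EE(\hat X_{n+1}\mid\mathcal F_n)=\tfrac{p}{n}\hat S_n$, hence $\EE(\hat S_{n+1}\mid\mathcal F_n)=\tfrac{n+p}{n}\hat S_n$; since the recursion $\Gamma(z+1)=z\Gamma(z)$ gives $a_{n+1}/a_n=(n+p)/n$ and $a_1=1$, it follows that $\EE(M_{n+1}\mid\mathcal F_n)=M_n$ and $\EE(M_1)=\EE(X_1)=0$, while $M_n\in L^2(\PP)$ because $X\in L^2(\PP)$.

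Next I would compute the increments $d_k=M_k-M_{k-1}$ (with $M_0=0$). From $\hat S_k=\hat S_{k-1}+\hat X_k$ and $a_k^{-1}-a_{k-1}^{-1}=-\tfrac{p}{k-1}a_k^{-1}$ one obtains, for $k\ge 2$,
\[
d_k=\frac{1}{a_k}\Bigl(\hat X_k-\frac{p}{k-1}\hat S_{k-1}\Bigr),\qquad d_1=X_1.
\]
Conditioning on $\mathcal F_{k-1}$ and using $\EE(\hat X_k\mid\mathcal F_{k-1})=\tfrac{p}{k-1}\hat S_{k-1}$ together with $\EE(\hat X_k^2)=\EE(X^2)=:\sigma^2$ (each $\hat X_k$ has law $\mu$), a short expansion yields $\EE(d_k^2)=a_k^{-2}\bigl(\sigma^2-\tfrac{p^2}{(k-1)^2}\EE(\hat S_{k-1}^2)\bigr)$. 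To control $u_n:=\EE(\hat S_n^2)$ I would square $\hat S_{n+1}=\hat S_n+\hat X_{n+1}$, take expectations, and use $\EE(\hat X_{n+1}^2\mid\mathcal F_n)=(1-p)\sigma^2+\tfrac{p}{n}\sum_{j\le n}\hat X_j^2$, arriving at $u_{n+1}=\tfrac{n+2p}{n}u_n+\sigma^2$, $u_1=\sigma^2$. Comparing with $b_n=\Gamma(n+2p)/(\Gamma(n)\Gamma(2p+1))$, which satisfies $b_{n+1}/b_n=(n+2p)/n$, gives $u_n=b_n\sigma^2\bigl(1+\sum_{k=2}^n b_k^{-1}\bigr)$; since $2p>1$ the series converges and $b_n\sim n^{2p}/\Gamma(2p+1)$, so $u_n=\Theta(n^{2p})$.

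With $a_n\sim n^p/\Gamma(p+1)$ this gives $\EE(d_k^2)=O(k^{-2p})$, hence $\sum_{k\ge 1}\EE(d_k^2)<\infty$ because $2p>1$. Theorem~\ref{thm:Heyde}(i) then yields that $\hat W:=\sum_{k\ge 1}d_k$ exists almost surely and $M_n\to\hat W$ in $L^2(\PP)$ (equivalently, $(M_n)$ is an $L^2$-bounded martingale and one may invoke standard martingale convergence). Finally, orthogonality of the increments gives $\EE(\hat W^2)=\sum_{k\ge 1}\EE(d_k^2)\ge\EE(d_1^2)=\sigma^2>0$ (the non-degeneracy of $X$, i.e.\ $\sigma^2>0$, being of course needed for that of $\hat W$), and $\EE(\hat W)=0$, so $\hat W$ is non-degenerate. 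I expect the only genuinely computational point to be the second-moment recursion and the Gamma-ratio asymptotics $a_n\sim n^p/\Gamma(p+1)$, $b_n\sim n^{2p}/\Gamma(2p+1)$ from Stirling; the remainder is routine bookkeeping.
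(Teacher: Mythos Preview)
Your proof is correct and follows essentially the same route as the paper: verify the martingale property via $\EE(\hat X_{n+1}\mid\mathcal F_n)=\tfrac{p}{n}\hat S_n$ and the recursion $a_{n+1}/a_n=(n+p)/n$, show $\sum_k\EE(d_k^2)<\infty$, and then invoke Theorem~\ref{thm:Heyde}(i) for the a.s.\ and $L^2$ convergence and the orthogonality of increments for non-degeneracy.

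The only notable difference is your detour through the recursion $u_{n+1}=\tfrac{n+2p}{n}u_n+\sigma^2$ for $u_n=\EE(\hat S_n^2)$. This is correct but unnecessary: from your own identity $\EE(d_k^2)=a_k^{-2}\bigl(\sigma^2-\tfrac{p^2}{(k-1)^2}\EE(\hat S_{k-1}^2)\bigr)$ one sees immediately that $\EE(d_k^2)\le\sigma^2/a_k^2$, since the subtracted term is non-negative, and this is exactly the shortcut the paper takes. Your sharper asymptotic $u_n=\Theta(n^{2p})$ does give the precise order of $\EE(d_k^2)$ rather than just an upper bound, but for the statement at hand only summability is needed.
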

\begin{proof}
We first point out that for any $n \in \mathbb{N}$, we have 
\begin{align*}
\EE \left( \hat{X}_{n+1} \mid \mathcal{F}_n \right) = (1-p) \EE (X) + p \frac{\hat{X}_1 + \dots + \hat{X}_n}{n} = p \frac{\hat{S}_n}{n}.
\end{align*}
Hence, by (\ref{rec:defOfRRW}) and if we set $\gamma_n = n/(n+p)$, then
\begin{align}\label{proofstep:NeededForMartingale}
\EE( \hat{S}_{n+1} \mid \mathcal{F}_n) =  \frac{\hat{S}_n}{\gamma_n}.
\end{align} 
Moreover, 
\begin{align*}
\prod_{k=1}^{n-1} \gamma_k = \frac{\Gamma(n) \Gamma(p+1)}{ \Gamma(n+p)} = \frac{1}{a_n}
\end{align*}
where $\Gamma$ stands for the Euler gamma function. Therefore, let $(M_n)_{n \geq 1}$ be the sequence of random variables defined, for all $n \geq 1$, by $M_n= \hat{S}_n/a_n$. With an appeal to (\ref{proofstep:NeededForMartingale}), we obtain
\begin{align*}
\EE(M_{n+1} \mid \mathcal{F}_n)= \frac{1}{a_{n+1}} \EE(  \hat{S}_{n+1} \mid \mathcal{F}_n) = \frac{\gamma_n}{a_n}\EE(\hat{S}_{n+1} \mid \mathcal{F}_n) =  \frac{\hat{S}_n}{a_n}=M_n.
\end{align*}
Hence $(M_n)_{n \geq 1}$ is a martingale. Further, as $X$ is centered, we have 
\begin{align*}
\EE( M_n) = \EE( \hat{X}_1)= \EE(X_1)=\EE(X)=0.
\end{align*}
Next, we observe that 
\begin{align}
\EE(M_{n+1}^2-M_n^2 \mid \mathcal{F}_n) &= \EE( (M_{n+1}-M_n)^2 \mid \mathcal{F}_n) \notag \\
&= \frac{\EE ( (\hat{X}_{n+1}- \EE(\hat{X}_{n+1} \mid \mathcal{F}_n))^2 \mid \mathcal{F}_n )}{a_{n+1}^2} \notag \\
&= \frac{\EE(\hat{X}_{n+1}^2 \mid \mathcal{F}_n)- ( \EE(\hat{X}_{n+1} \mid \mathcal{F}_n))^2}{a_{n+1}^2} \notag \\
&= \frac{\EE(\hat{X}_{n+1}^2 \mid \mathcal{F}_n) - \frac{p^2}{n^2}\hat{S}_n^2}{a_{n+1}^2} \notag \\
&= \frac{\EE(\hat{X}_{n+1}^2 \mid \mathcal{F}_n)}{a_{n+1}^2}-\frac{p^2}{n^2}M_n^2. \label{eq:RequiredForProof}
\end{align}
In particular we obtain 
\begin{align}
\EE( M_{n+1}^2- M_n^2) &= \frac{\EE(\hat{X}_{n+1}^2)}{a_{n+1}^2}- \frac{p^2}{n^2}\EE(M_n^2) \notag \\
&= \frac{\EE(X^2)}{a_{n+1}^2}- \frac{p^2}{n^2}\EE(M_n^2) \notag \\
&= \frac{\sigma^2}{a_{n+1}^2}- \frac{p^2}{n^2}\EE(M_n^2)\label{ineq:summableneed} \\
& \leq \frac{\sigma^2}{a_{n+1}^2}. \notag
\end{align}
As a consequence, by summarizing the most left side of the inequality above over all $k=0, \dots , n$, we obtain that the martingale $(M_n)_{ n \in \mathbb{N}}$ is square integrable. 

Next, let us denote by $d_k:=M_k-M_{k-1}$, which is a martingale difference for $k\in \mathbb{N}$.
Thanks to (\ref{ineq:summableneed}) we have for all $k=1,2, \dots$
\begin{align*}
\EE((d_k)^2 )\leq \frac{\sigma^2}{a_{k}^2}.
\end{align*}
Further, by Stirling's formula for Gamma functions, we have 
\begin{align*}
a_n \sim \frac{n^p}{\Gamma(p+1)}, \quad \text{as } n \to \infty.
\end{align*}
Hence, 
\begin{align} \label{Asymptotic:UsefulForSecondMoment}
\EE(|d_n|^2) \leq \frac{ \sigma^2}{a_n^2}  \sim \sigma^2 \frac{\Gamma(p+1)^2}{n^{2p}}, \quad \text{as } n \to \infty,
\end{align}
which is summable as $p>1/2$. Thus Theorem \ref{thm:Heyde} i) of Heyde implies that \begin{align*}
\hat{W}:= \sum_{k=1}^\infty d_k = \lim_{n \to \infty} M_n = \lim_{n \to \infty} \frac{\hat{S}_n}{a_n}
\end{align*}
exists almost surely, and $M_n \to \hat{W}$ almost surely and in $L^2$ as $n \to \infty$. Plainly, we have $\EE(\hat{W})=0$ and 
\begin{align*}
\EE(\hat{W}^2)= \sum_{k=1}^\infty \EE((d_k)^2)>0.
\end{align*}
Thus $\hat{W}$ is of positive variance and therefore non-degenerate. Further it follows from (\ref{Asymptotic:UsefulForSecondMoment}) that $\hat{W} \in L^2( \PP)$. This concludes the proof.
\end{proof}
As a consequence of Proposition \ref{prop:MultMartStart} we recover and improve Theorem 3.2. in \cite{BertoinUniversality}. In particular we establish a stronger convergence as described in Theorem \ref{thm:MyMainResult} for dimension one. 
\begin{cor} \label{cor:ImprovementOfBertoinRes} Let $p \in (1/2,1)$, and suppose $X \in L^2(\PP)$, then we have
\begin{align*}
\lim_{n \to \infty} \frac{\hat{S}_n}{n^p} = L,
\end{align*}
a.s. and in $L^2(\PP)$ where $L \in L^2(\PP)$ is some non-degenerate random variable. In particular 
\begin{align*}
\lim_{n \to \infty} \frac{\hat{S}_n}{n}= 0 \quad \text{a.s.}
\end{align*}
\end{cor}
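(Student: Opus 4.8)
The plan is to read the statement off directly from Proposition \ref{prop:MultMartStart}, using only the asymptotics of the sequence $(a_n)$ already recorded in its proof. First I would recall that Stirling's formula for the Gamma function gives $a_n \sim n^p/\Gamma(p+1)$, that is $a_n/n^p \to 1/\Gamma(p+1)$ as $n \to \infty$; in particular $(a_n/n^p)_{n \geq 1}$ is a bounded deterministic sequence with strictly positive limit. Since by Proposition \ref{prop:MultMartStart} the martingale $M_n = \hat{S}_n/a_n$ converges almost surely and in $L^2(\PP)$ to the non-degenerate limit $\hat{W} \in L^2(\PP)$, the decomposition
\begin{align*}
\frac{\hat{S}_n}{n^p} = \frac{a_n}{n^p} \cdot M_n
\end{align*}
immediately yields $\hat{S}_n/n^p \to L := \hat{W}/\Gamma(p+1)$ almost surely, as the product of a convergent deterministic sequence and an almost surely convergent one.

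For the convergence in $L^2(\PP)$ I would argue via the triangle inequality,
\begin{align*}
\left\| \frac{\hat{S}_n}{n^p} - L \right\|_2 \leq \frac{a_n}{n^p} \left\| M_n - \hat{W} \right\|_2 + \left| \frac{a_n}{n^p} - \frac{1}{\Gamma(p+1)} \right| \cdot \| \hat{W} \|_2,
\end{align*}
where both terms vanish as $n \to \infty$: the first because $a_n/n^p$ is bounded in $n$ and $M_n \to \hat{W}$ in $L^2$, the second because $a_n/n^p \to 1/\Gamma(p+1)$ and $\hat{W} \in L^2(\PP)$. Since $\Gamma(p+1) > 0$, non-degeneracy of $\hat{W}$ transfers to $L$, and $\hat{W} \in L^2(\PP)$ gives $L \in L^2(\PP)$.

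Finally, for the displayed ``in particular'' I would write $\hat{S}_n/n = n^{p-1} \cdot (\hat{S}_n/n^p)$ and use that $p \in (1/2,1)$ forces $n^{p-1} \to 0$, while $\hat{S}_n/n^p$ converges almost surely to the almost surely finite random variable $L$; hence $\hat{S}_n/n \to 0$ almost surely.

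The argument is essentially bookkeeping, so I do not anticipate a genuine obstacle. The only point that needs a line of care is the passage to $L^2$-convergence, which requires the deterministic factor $a_n/n^p$ to be \emph{bounded} in $n$ and to converge, rather than merely to converge pointwise; both properties are furnished by the Stirling asymptotics quoted above.
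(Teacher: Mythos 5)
Your proposal is correct and follows essentially the same route as the paper: deduce the statement from Proposition \ref{prop:MultMartStart} together with the Stirling asymptotics $a_n \sim n^p/\Gamma(p+1)$, set $L = \hat{W}/\Gamma(p+1)$, and conclude the second display from $n^{p-1} \to 0$. Your triangle-inequality step merely spells out the $L^2$-convergence that the paper treats as immediate, so there is no substantive difference.
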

\begin{proof}
Thanks to Proposition \ref{prop:MultMartStart} we obtain that 
\begin{align*}
\frac{\hat{S}_n}{a_n} \xrightarrow{n \to \infty} \hat{W}
\end{align*}
a.s. and in $L^2(\PP)$. Since $a_n \sim n^p/\Gamma(p+1)$ as $n \to \infty$, the claim follows by letting $L:=\hat{W}/\Gamma(p+1)$. Since $\hat{W} \in L^2(\PP)$, it follows immediately that $L \in L^2(\PP)$. The second statement then follows plainly as $\hat{L} \in L^2(\PP)$ and
\begin{align*}
\frac{\hat{S}_n}{n}= \frac{\hat{S}_n}{n^p} \frac{1}{n^{1-p}} \to 0 \quad \text{a.s.}
\end{align*}
\end{proof}
\subsection{The case when $X$ is bounded} \label{subsection:BoundedCase}
In this section we shall present a proof of Theorem \ref{thm:MyMainResult} in dimension $d=1$. To make the proof more tractable, we shall first do this under the additional assumption that the typical step $X$ is bounded. In this direction, we first recall a useful lemma from calculus.
\begin{lem}[Heyde \cite{Heyde}, Lemma 1 (ii)] \label{lem:KroneckersLem} Consider a positive real sequence $(a_n)_{n \geq 1}$ which monotonically diverges to $+ \infty$, and let $(b_n)_{n \geq 1}$ be another real-valued sequence. If $\sum_{k=1}^\infty a_k b_k < \infty,$
then 
\begin{align*}
\lim_{n \to \infty} a_n \sum_{k=n}^\infty b_k =0. 
\end{align*}

\end{lem}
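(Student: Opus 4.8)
The plan is to reduce the statement to a clean application of Abel summation (summation by parts) applied to the tails of the convergent series $\sum_k a_k b_k$. First I would set $c_k := a_k b_k$, so that the hypothesis reads: the series $\sum_{k=1}^\infty c_k$ converges. Writing $T_n := \sum_{k=n}^\infty c_k$ for its tails, convergence of the series is precisely the statement that $T_n \to 0$ as $n \to \infty$. Since $a_k \ge a_1 > 0$ and $(a_k)$ increases to $+\infty$, the sequence $(1/a_k)$ decreases monotonically to $0$; writing $b_k = c_k/a_k$, Abel's test already guarantees that the tail $\sum_{k=n}^\infty b_k$ converges, so the quantity $a_n \sum_{k=n}^\infty b_k$ is well defined.

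The core step is a summation-by-parts identity. Substituting $c_k = T_k - T_{k+1}$ into $\sum_{k=n}^N b_k = \sum_{k=n}^N c_k/a_k$ and reindexing the second resulting sum, I would obtain
\begin{align*}
\sum_{k=n}^N \frac{c_k}{a_k} = \frac{T_n}{a_n} - \frac{T_{N+1}}{a_N} + \sum_{k=n+1}^N T_k\left( \frac{1}{a_k} - \frac{1}{a_{k-1}} \right).
\end{align*}
Letting $N \to \infty$, the boundary term $T_{N+1}/a_N$ vanishes because $T_{N+1}\to 0$ while $a_N \to \infty$, so that after multiplying through by $a_n$,
\begin{align*}
a_n \sum_{k=n}^\infty b_k = T_n + a_n \sum_{k=n+1}^\infty T_k\left( \frac{1}{a_k} - \frac{1}{a_{k-1}} \right).
\end{align*}

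It then remains to control the two terms. The first, $T_n$, tends to $0$ by convergence of $\sum c_k$. For the second I would exploit the monotonicity of $(a_k)$: since $1/a_k - 1/a_{k-1} \le 0$, bounding $|T_k|$ by $\sup_{j \ge n+1}|T_j|$ and telescoping the resulting nonnegative series gives
\begin{align*}
\left| a_n \sum_{k=n+1}^\infty T_k\left( \frac{1}{a_k} - \frac{1}{a_{k-1}} \right) \right| \le a_n \Big( \sup_{j \ge n+1}|T_j| \Big) \sum_{k=n+1}^\infty \left( \frac{1}{a_{k-1}} - \frac{1}{a_k} \right) = \sup_{j \ge n+1}|T_j|,
\end{align*}
where the telescoping sum equals $1/a_n$ precisely because $1/a_k \to 0$, so the prefactor $a_n$ cancels exactly. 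As $n \to \infty$ the supremum on the right tends to $0$, again by $T_j \to 0$, and hence the left-hand side vanishes. Combining the two estimates yields $a_n \sum_{k=n}^\infty b_k \to 0$, as claimed.

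I expect the only delicate point to be the careful handling of the boundary term at infinity in the summation by parts, namely justifying that $T_{N+1}/a_N \to 0$ and that the rearranged series genuinely converges, rather than any deep analytic difficulty; the essential mechanism is simply that the telescoped factor $\sum (1/a_{k-1} - 1/a_k) = 1/a_n$ cancels the prefactor $a_n$, leaving a quantity controlled entirely by the vanishing tails $T_k$.
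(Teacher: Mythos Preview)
Your argument is correct. The summation-by-parts identity is verified exactly as you wrote it, the boundary term $T_{N+1}/a_N$ vanishes since $T_{N+1}\to 0$ and $(a_N)$ is bounded below away from zero, and the telescoping bound $\sum_{k\ge n+1}(1/a_{k-1}-1/a_k)=1/a_n$ (using $1/a_k\to 0$) gives the crucial cancellation with the prefactor $a_n$. One could tighten the exposition slightly by noting that the series $\sum_{k\ge n+1} T_k(1/a_k-1/a_{k-1})$ converges absolutely because $(T_k)$ is bounded and the telescoping series has finite total variation $1/a_n$; you allude to this with Abel's test but it is worth making explicit that this is what justifies passing to $N=\infty$.

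As for comparison: the paper does not actually prove this lemma. It is stated as Lemma~\ref{lem:KroneckersLem} with attribution to Heyde and used as a black box in the proof of Theorem~\ref{thm:MyMainResult}. Your Abel-summation proof is the standard route to this Kronecker-type tail estimate and would serve perfectly well as a self-contained replacement for the citation.
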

\begin{proof}[Proof of Theorem \ref{thm:MyMainResult}]
Since $M_n \to \hat{W}$ in $L^2$ and thanks to (\ref{ineq:summableneed}) we have
\begin{align*}
\EE[(d_n)^2] = \frac{\sigma^2}{a_{n}^2}- \frac{p^2}{(n-1)^2}\EE(M_{n-1}^2) \sim \frac{\sigma^2 \Gamma(p+1)^2}{n^{2p}}, \quad \text{as } n \to \infty.
\end{align*}
Hence
\begin{align} 
s_n^2 &:= \sum_{k=n}^\infty \EE[(d_k)^2] \notag \\ &\sim \sigma^2 \Gamma(p+1)^2 \sum_{k=n}^\infty \frac{1}{k^{2p}} \notag \\
& \sim \frac{\sigma^2 \Gamma(p+1)^2}{2p-1} \frac{1}{n^{2p-1}} \notag \\
& \sim \frac{\sigma^2}{2p-1}n \frac{1}{a_n^2}, \quad \text{as } n \to \infty.\label{eq:bound}
\end{align}
Let $\hat{V}_n := \hat{X}_1^2 + \dots + \hat{X}_n^2$, then $(\hat{V}_n)_{n \geq 1}$ is another step-reinforced random walk with typical step distributed as $X^2$ and it holds that
\begin{align*}
\EE( \hat{X}_{k+1}^2 \mid \mathcal{F}_k) = p \frac{\hat{V}_k}{k}+(1-p) \sigma^2.
\end{align*}
Observe that $X^2$ is no longer centered, however, we introduce 
\begin{align*}
 \hat{W}_n &= \left( \hat{X}_1^2 - \EE(X^2) \right) + \dots + \left( \hat{X}^2_n- \EE(X^2) \right) \\
     &= \hat{Y_1} + \dots + \hat{Y_n}
\end{align*}
with an obvious choice of notation. In turn, this is a step-reinforced version of the random walk with typical step distributed as $X^2- \EE(X^2)$ and hence centered. Since 
\begin{align*}
\hat{V}_n = \hat{W}_n + n \EE(X^2) = \hat{W}_n + n \sigma^2
\end{align*} and because $p \in (1/2,1)$, we then obtain by Corollary \ref{cor:ImprovementOfBertoinRes} that 
\begin{align*}
\lim_{n \to \infty} \frac{\hat{V}_n}{n}= \sigma^2 \qquad \text{a.s.}
\end{align*}
As the martingale $(M_n)_{n \geq 1}$ converges in $L^2 ( \PP)$ to $W$, we conclude with an appeal to (\ref{eq:RequiredForProof}) that as $n$ tends to infinity 
\begin{align*}
\EE[( d_{n+1})^2 \mid \mathcal{F}_n) \sim \frac{\sigma^2\Gamma(p+1)^2}{n^{2p}}, \quad \text{a.s.}
\end{align*}
A computation analogous to (\ref{eq:bound}) yields as $n \to \infty$
\begin{align*} 
A_n^2:= \sum_{k=n}^\infty \EE[ ( d_k)^2 \mid \mathcal{F}_{k-1}] \sim \frac{\sigma^2}{2p-1}n \frac{1}{a_n^2}, \quad \text{a.s.},
\end{align*}
in particular this implies that 
\begin{align} \label{eq:importantlimit}
\lim_{n \to \infty} \frac{A_n^2}{s_n^2}=1 \quad  \text{in probability}.
\end{align}
Further, as $n \to \infty$, 
\begin{align*}
\sum_{k=1}^n \EE\left [ (d_k)^2 \mid \mathcal{F}_{k-1} \right] \sim \sigma^2 \Gamma(p+1)^2 \sum_{k=1}^n \frac{1}{k^{2p}}
\end{align*}
and as $p \in (1/2,1)$ the above is finite for $n \to \infty$. Hence we have 
\begin{align*}
\sum_{k=1}^\infty \EE \left[ (d_k)^2 \mid \mathcal{F}_{k-1} \right] < \infty \text{ a.s.}
\end{align*}
As $(M_n)_{n \geq 1}$ is a martingale which converges a.s. we conclude that
\begin{align*}
\sum_{k=1}^\infty \frac{1}{s_k^2} \left[ (d_k)^2- \EE\left[ (d_k)^2 \mid \mathcal{F}_{k-1} \right] \right] < + \infty \text{ a.s.}
\end{align*}
Since plainly $(1/ s_n^2)_{n \geq 1}$ monotonically diverges to $+\infty$, the above yields together with Lemma \ref{lem:KroneckersLem} that
\begin{align*}
\lim_{n \to \infty} \frac{1}{s_n^2}\sum_{k=n}^\infty \left[ (d_k)^2- \EE \left[ (d_k)^2 \mid \mathcal{F}_{k-1} \right] \right] =0 \text{ a.s.}
\end{align*}
and together with (\ref{eq:importantlimit}) we conclude that 
\begin{align} \label{ConditionAHeyde}
\lim_{n \to \infty} \frac{1}{s_n^2}\sum_{k=n}^\infty (d_k)^2 =1 \quad \text{in probability}.
\end{align}
In turn, (\ref{ConditionAHeyde}) shows that condition a) of Theorem \ref{thm:Heyde} of Heyde holds.

Next, we have for $k=1,2, \dots $
\begin{align}
d_k
&= \frac{\hat{S}_k - \EE( \hat{S}_k \mid \mathcal{F}_{k-1})}{a_k} \notag \\
&= \frac{\hat{X}_k- \EE( \hat{X}_k \mid \mathcal{F}_{k-1})}{a_k} \label{eq:MakesItEasier}\\
&= \frac{\hat{X}_k}{a_k}- \frac{p}{a_k} \frac{\hat{S}_{k-1}}{k-1} \notag .
\end{align}
Under our standing assumption that the typical step $X$ of $( \hat{S}_n)_{n \geq 1}$ is bounded, we observe from (\ref{eq:MakesItEasier}) that 
\begin{align*}
|d_k| \leq  \frac{2 \|X \|_\infty}{a_k} \leq 2 \| X\|_\infty.
\end{align*}
 We have 
\begin{align} \label{bound:SimpleWhenBounded}
\sup_{k \geq n } d_k^2  \leq \frac{4 \|X\|_\infty^2}{a_n^2} \sim \frac{4 \|X\|_\infty^2 \Gamma(p+1)^2}{n^{2p}}, \quad \text{as } n \to \infty. 
\end{align}
Using (\ref{bound:SimpleWhenBounded}) and (\ref{eq:bound}) yields that 
\begin{align} \label{ConditionBHeyde}
\lim_{n \to \infty} \frac{1}{s_n^2} \EE \Big( \sup_{k \geq n } d_k^2 \Big) \leq \lim_{n \to \infty} \frac{1}{s_n^2} \frac{4 \|X\|_\infty^2}{a_n^2} =  \frac{4(2p-1) \|X\|_\infty^2}{\sigma^2} \lim_{n \to \infty} \frac{1}{n} =0.
\end{align}
With (\ref{ConditionBHeyde}) we conclude that condition b) of Theorem \ref{thm:Heyde} is satisfied.

Since
\begin{align*}
\frac{\hat{W}-M_n}{s_{n+1}} = \frac{a_n\hat{W}- \hat{S}_n }{s_{n+1}a_n}
\end{align*}
and because
\begin{align*}
s_{n+1}  \sim \sqrt{ \frac{\sigma^2}{2p-1}n} \frac{1}{a_n}, \quad \text{as }n \to \infty
\end{align*}
the desired conclusion follows from Theorem \ref{thm:Heyde} as
\begin{align*}
\frac{a_n \hat{W}-\hat{S}_n}{ \sqrt{\frac{\sigma^2}{2p-1} n}} \sim \frac{\hat{W}-M_n}{s_{n+1}}   \implies \mathcal{N}(0,1), \quad \text{as } n \to \infty.
\end{align*}
Hence the proof of Theorem \ref{thm:MyMainResult} is complete when $\|X \|_\infty < \infty$. 
\end{proof}
So far we have established in dimension $d=1$ and under the additional assumption that the typical step $X \in L^2(\PP)$ is bounded the distributional convergence as $n \to \infty$
\begin{align} \label{question:Replacement}
\frac{\hat{S}_n-a_n \hat{W}}{\sqrt{n}} \implies \mathcal{N}(0, \sigma^2/(2p-1)).
\end{align}
One can wonder whether it is possible to replace $a_n$ with $n^p$ in (\ref{question:Replacement}), as the two expressions are asymptotically equivalent. However, this is not possible because
\begin{align*}
 \frac{n^p-a_n}{\sqrt{n}} = n^{p- 1/2} \left( 1- \frac{a_n}{n^p} \right) \sim n^{p-1/2} \left( 1- \Gamma(p+1) \right), \quad \text{as } n \to \infty
\end{align*}
and since $p \in (1/2,1)$ the term $n^{p-1/2}$ explodes to infinity as $n \to \infty$. 
\subsection{A truncation argument} \label{subsection:Truncation}
In this section we only assume that $X \in L^2(\PP)$ is centered. We shall now complete the proof of Theorem \ref{thm:MyMainResult} for dimension $d=1$ by a truncation argument, this idea is adapted from Section 4.3 of \cite{BertoinUniversality}. 

We require the following technical lemma concering convergence on metric space: 
 \begin{lem} \label{lemma:theReductionLemma} Let $(E,d)$ be a metric space and consider  $(a_n^{(m)} \, : \, m,n \in \mathbb{N})$ a family of sequences, with  $a_n^{(m)} \in E$ for all $n, m \in \mathbb{N}$. Suppose further that the following conditions are satisfied:
\begin{enumerate}
    \item[(i)] For each fixed $m \in \mathbb{N}$, $a_n^{(m)} {\rightarrow} a_\infty^{(m)}$ as $n \to \infty$ for some element $a_\infty^{(m)} \in E$.
    \item[(ii)] $a_\infty^{(m)} {\rightarrow} a_{\infty}^{(\infty)}$ as  $m \to \infty$, for some $a_\infty^{(\infty)} \in E$.
\end{enumerate}
Then, there exists a non-decreasing  subsequence $(b_n)_{n \in \mathbb{N}}$ with $b_n \rightarrow  \infty$ as $n \to \infty$,  for which the following convergence holds:
\begin{equation*}
    a_n^{(b_n)} {\rightarrow} a_\infty^{(\infty)} \quad \text{ as } n \to \infty.
\end{equation*}
\end{lem}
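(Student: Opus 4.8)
The plan is to extract a suitable slowly-growing diagonal subsequence by a standard "reverse-diagonal" argument. First I would fix a countable base of neighbourhoods of the target point in the form of the open balls $B(a_\infty^{(\infty)},1/k)$ for $k\in\mathbb N$, so that it suffices to ensure that $a_n^{(b_n)}$ eventually enters each such ball. For each $k\in\mathbb N$, hypothesis (ii) furnishes an index $m_k\in\mathbb N$ with $d\bigl(a_\infty^{(m_k)},a_\infty^{(\infty)}\bigr)<\tfrac1{2k}$; without loss of generality I take $m_k$ non-decreasing in $k$ and $m_k\to\infty$ (replace $m_k$ by $\max\{m_1,\dots,m_k,k\}$ if necessary, noting that hypothesis (ii) only requires the inequality for \emph{some} index, and enlarging it while keeping it finite is harmless once we also shrink the ball — more carefully, for each $k$ we may simply pick $m_k$ with $d(a_\infty^{(m_k)},a_\infty^{(\infty)})<1/(2k)$ and $m_k\ge k$, which is possible because (ii) says $a_\infty^{(m)}\to a_\infty^{(\infty)}$, so the inequality holds for all large $m$). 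Then, for each fixed $k$, hypothesis (i) applied with $m=m_k$ gives an index $N_k\in\mathbb N$ such that $d\bigl(a_n^{(m_k)},a_\infty^{(m_k)}\bigr)<\tfrac1{2k}$ for all $n\ge N_k$; I also arrange $N_1<N_2<N_3<\cdots$ by replacing $N_k$ with $\max\{N_k,N_{k-1}+1\}$.

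Next I define the diagonal sequence $(b_n)_{n\in\mathbb N}$ by setting $b_n=m_1$ for $n<N_1$, and $b_n=m_k$ for $N_k\le n<N_{k+1}$. Since the $m_k$ are non-decreasing and the $N_k$ strictly increasing, $(b_n)$ is non-decreasing; and since $m_k\to\infty$ while $n\ge N_k$ forces $b_n\ge m_k$, we get $b_n\to\infty$. It remains to check the convergence $a_n^{(b_n)}\to a_\infty^{(\infty)}$. Given $\varepsilon>0$, choose $k$ with $1/k<\varepsilon$; then for every $n\ge N_k$ we have $b_n=m_j$ for some $j\ge k$, and by the triangle inequality
\begin{align*}
d\bigl(a_n^{(b_n)},a_\infty^{(\infty)}\bigr)
&\le d\bigl(a_n^{(m_j)},a_\infty^{(m_j)}\bigr)+d\bigl(a_\infty^{(m_j)},a_\infty^{(\infty)}\bigr)
<\frac1{2j}+\frac1{2j}=\frac1{j}\le\frac1{k}<\varepsilon,
\end{align*}
where the bound $d(a_n^{(m_j)},a_\infty^{(m_j)})<1/(2j)$ uses $n\ge N_k\ge N_j$ (valid since the relevant range is $N_j\le n<N_{j+1}$, so in particular $n\ge N_j$). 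This proves $a_n^{(b_n)}\to a_\infty^{(\infty)}$.

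The only genuinely delicate point is the bookkeeping that makes $(b_n)$ simultaneously non-decreasing, divergent to $+\infty$, and "slow enough" that on the block $N_k\le n<N_{k+1}$ the first-coordinate convergence from (i) with the \emph{fixed} parameter $m_k$ has already kicked in — i.e. the interdependence between the choice of $m_k$ (from (ii)) and of $N_k$ (from (i) \emph{after} $m_k$ is fixed), which is why the indices must be chosen in that order. Everything else is routine triangle-inequality estimation, and no completeness or other structural assumption on $(E,d)$ is needed beyond its being a metric space.
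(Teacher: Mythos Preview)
Your proof is correct and follows essentially the same diagonal-block construction as the paper: choose indices $m_k$ via (ii), then thresholds $N_k$ via (i) for the fixed $m_k$, and set $b_n=m_k$ on the block $N_k\le n<N_{k+1}$. The only cosmetic difference is that you bound $d(a_\infty^{(m_k)},a_\infty^{(\infty)})<1/(2k)$ directly, whereas the paper instead controls the consecutive gaps $d(a_\infty^{(m_k)},a_\infty^{(m_{k+1})})\le 2^{-k}$ and telescopes to reach $a_\infty^{(\infty)}$; your variant is marginally cleaner but the strategy is the same.
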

\begin{proof}
Since the sequence $(a_\infty^{(m)})_m$ converges, we can find an increasing subsequence $m_1 \leq m_2 \leq \dots $ satisfying 
\begin{equation*}
    d(a_\infty^{(m_k)} , a_\infty^{(m_{k+1})}  ) \leq 2^{-k} \quad \quad \text{ for each } k \in \mathbb{N}.
\end{equation*}
Moreover, since for each fixed $m_k$ the corresponding sequence $(a_n^{(m_k)})_n$ converges, there exists a strictly increasing sequence $(n_k)_k$ satisfying that, for each $k \in \mathbb{N}$, 
\begin{align*}
     d(a_i^{(m_k)} , a_\infty^{(m_k)} ) \leq 2^{-k} \quad \quad \text{ for all } i \geq n_k.
\end{align*}
Now, we set
\begin{align*}
b_n:=\begin{cases} m_1, & \text{if } n < n_1, \\ m_k, & \text{if } n_k \leq n < n_{k+1} \end{cases}.
\end{align*}
Observe that for each $n \in \mathbb{N}$ there always exists an appropriate $k^* \in \mathbb{N}$ such that $n_{k^*} \leq n < n_{k^*+1}$. 
Plainly, $(b_n)_{n \in \mathbb{N}}$ is non-decreasing and satisfies $b_n \to \infty$ as $n \to \infty$. Finally, we claim $(a_n^{(b_n)})_n$ converges to $a_\infty^{(\infty)}$ as $n \to \infty$. Indeed, it suffices to observe that for $n_k \leq n < n_{k+1}$, by the triangle-inequality
\begin{align*}
    d(a_n^{(b_n)} , a_\infty^{(\infty)})      
    \leq d(a_n^{(m_k)} , a_\infty^{(m_k)}) + d(a_\infty^{(m_k)}, a_\infty^{(\infty)}) \leq  2^{-k} + \sum_{i=k}^\infty 2^{-i}.
\end{align*}
We conclude the proof by letting $n$ tend to infinity. 
\end{proof}
We are now ready to demonstrate the reduction argument. In this direction we introduce for any $b>0$
\begin{align*}
X^{(b)}:= X \mathbf{1}_{\{|X|\leq b\}}- \EE(X \mathbf{1}_{\{ |X| \leq b \}}),
\end{align*}
hence $X^{(b)}$ is a bounded and centered random variable, we shall denote by $\sigma^{(b)}= \sqrt{ \var (X^{(b)})}$ its standard deviation. Similarly, we set
\begin{align*}
\hat{X}_n^{(b)}:= \hat{X}_n \mathbf{1}_{\{ | \hat{X}_n| \leq b\}}- \EE( X \mathbf{1}_{\{ |X| \leq b \}}),
\end{align*}
and
\begin{align*}
\hat{S}_n^{(b)} = \hat{X}_1^{(b)} + \dots + \hat{X}_n^{(b)}.
\end{align*}
Clearly, $\hat{S}^{(b)}$ is a version of the step-reinforced random walk with typical step distributed as $X^{(b)}$. As the latter is bounded, an application of Theorem \ref{thm:MyMainResult} as proven in the last section shows that we have for $\hat{W}^{(b)}= \lim_{n \to \infty} \hat{S}_n^{(b)}/a_n$
\begin{align*}
\frac{\hat{S}_n^{(b)}- a_n\hat{W}^{(b)}}{\sqrt{n}} \implies \frac{\sigma^{(b)}}{\sqrt{2p-1}} \mathcal{N}(0,1), \quad \text{as } n \to \infty. 
\end{align*}
Since plainly $\sigma^{(b)}$ converges to $\sigma$ as $b \to \infty$ and since the convergence in distribution is metrisable, it follows readily by Lemma \ref{lemma:theReductionLemma} that there exists an increasing sequence $(b_n)_{n \geq 1}$ tending towards infinity such that
\begin{align*}
\frac{\hat{S}_n^{(b_n)}-a_n \hat{W}^{(b_n)}}{\sqrt{n}} \implies \frac{\sigma}{\sqrt{2p-1}}\mathcal{N}(0,1), \quad \text{as } n \to \infty,
\end{align*}
 Observe that $\hat{W}^{(1)}:= \hat{W}^{(b_n)}= \lim_{n \to \infty} \hat{S}_n^{(b_n)}/a_n$ is independent of $n$.  

Next, we consider 
\begin{align*}
\check{S}_n^{(b_n)}:= \hat{S}_n- \hat{S}_n^{(b_n)}
\end{align*}
and observe that 
\begin{align*}
\check{S}_n^{(b_n)} = \check{X}_1^{(b_n)} + \dots + \check{X}_n^{(b_n)}
\end{align*}
where
\begin{align*}
\check{X}_n^{(b_n)} = \hat{X}_n \mathbf{1}_{\{ | \hat{X}_n| > b_n\}}- \EE( \hat{X} \mathbf{1}_{\{ |\hat{X}| > b_n\}}).
\end{align*}
In turn, $\check{S}^{(b_n)}$ is also a step-reinforced random walk, this time with typical step distributed as $X-X^{(b_n)}$. Since $X$ is centered in $L^2(\PP)$, so is $X-X^{(b_n)}$, and if we write $\zeta^{(b_n)}= \sqrt{ \var (X-X^{(b_n)})}$ for its standard deviation, then clearly $\lim_{n \to \infty} \zeta^{(b_n)} =0$ since $b_n \to \infty$ as $n \to \infty$. Furthermore, if we set $\check{W}^{(b)}=\lim_{n \to \infty} \check{S}_n^{(b)}/a_n$ and $\check{W}^{(2)}= \check{W}^{(b_n)}$ then we have for $\hat{W}:= \hat{W}^{(1)}+\check{W}^{(2)}$, $Y^n:= n^{-1/2}( \hat{S}_n^{(b_n)}- \hat{W}^{(b_n)}a_n)$ and $Z^n:= n^{-1/2} ( \check{S}_n^{(b_n)}- \check{W}^{(b_n)}a_n)$ the decomposition 
\begin{align}
n^{-1/2} (\hat{S}_n-a_n \hat{W}) = Y^n + Z^n. \label{eq:decomposition}
\end{align}
For $k \geq n$ we consider the martingale $M'_k:= \check{S}_k^{(b_k)}/a_k$. An application of Proposition \ref{prop:MultMartStart} yields that $M_k' \to \check{W}$ a.s. and in $L^2(\PP)$ as $k \to \infty$. Thanks to (\ref{eq:bound}) we arrive at 
\begin{align*}
\EE((Z^n)^2)&= n^{-1} a_n^2 \EE \left( \left| \frac{\check{S}_n^{(b_n)}}{a_n}-\check{W}^{(b_n)} \right|^2 \right) \\ 
&= n^{-1}a_n^2 \sum_{k=n}^\infty \EE\left( (M_k'-M_{k-1}')^2 \right) \sim \frac{(\zeta^{(b_n)})^2}{2p-1} \xrightarrow{n \to \infty} 0.
\end{align*}
This shows that $Z^n \to 0$ in $L^2(\PP)$ as $n \to \infty$ and hence also in distribution. This concludes the proof of Theorem \ref{thm:MyMainResult} for dimension $d=1$ by an appeal to (\ref{eq:decomposition}). 

\subsection{Reduction to dimension one} \label{subsection:ReductionTod1}

Recall that so far we have explicitly worked in dimension $d=1$. As we have already established Theorem \ref{thm:MyMainResult} in the one-dimensional case, we show in this section how the general $d$-dimensional case for a $d \geq 2$ follows. 

 We consider
\begin{align*}
\mathbf{ \hat{S}}_n = \mathbf{ \hat{X}}_1 + \dots + \mathbf{ \hat{X}}_n, \quad n \in \mathbb{N},
\end{align*}
with typical step distributed as $\mathbf{X}=(X^1, \dots , X^d)^\text{T} \in L^2(\PP)$. 
We observe that for each $i \in \mathbb{N}$ we have 
\begin{align*}
\mathbf{ \hat{X}}_i = ( \hat{X}_i^1, \dots , \hat{X}_i^d)^\text{T}
\end{align*}
and thus
\begin{align*}
\mathbf{ \hat{S}}_n = ( \hat{S}_n^1, \dots , \hat{S}_n^d)^\text{T}
\end{align*}
where each component $\hat{S}_n^1, \dots , \hat{S}_n^d$ of $\mathbf{\hat{S}}_n$ is itself a one-dimensional step-reinforced random walk. 

Since the almost sure convergence of a $d$-dimensional random vector is characterized by the almost sure convergence of its components, we conclude that as $n \to \infty$,
\begin{align*}
\frac{\mathbf{ \hat{S}}_n}{n^p} \to \mathbf{L} \quad \text{a.s.,}
\end{align*}
where $\mathbf{L}=(L^1, \dots , L^d) \in L^2(\PP)$ is a non-degenerate random vector in $\mathbb{R}^d$. 

Next, let $ \mathbf{a}=(a^1, \dots, a^d) \in \mathbb{R}^d$ be an arbitrary deterministic vector. Let $\langle  \cdot , \cdot \rangle$ denote the standard scalar product in $\mathbb{R}^d$. By our discussion above, it is then immediate that $\langle  \mathbf{a},\mathbf{ \hat{S}}_n \rangle$ is a one-dimensional step-reinforced random walk with typical step distributed as $\langle \mathbf{a} , \mathbf{X} \rangle \in L^2(\PP)$, observe that $\EE( \langle \mathbf{a}, \mathbf{X} \rangle)= \mathbf{0} \in \mathbb{R}^d$. Since Theorem \ref{thm:MyMainResult} has been established for dimension $d=1$, we have as $n \to \infty$ the convergence in distribution 
\begin{align} \label{conv:Distributional}
\frac{\langle \mathbf{a}, \mathbf{\hat{S}}_n \rangle - \langle \mathbf{a}, a_n \mathbf{L} \rangle }{\sqrt{n}} \implies \mathcal{N}(0, \sigma_a^2/(2p-1)),
\end{align}
where $\sigma_a^2= \EE ( \langle \mathbf{a}, \mathbf{X} \rangle^2)$. Next let us consider $\mathbf{N} \sim \mathcal{N}_d(\mathbf{0}, \bm{\sigma}^2/(2p-1))$, by definition of the multivariate normal distribution that means $\mathbf{N}= \mathbf{AZ}$ where $\mathbf{Z}=(Z^1, \dots , Z^d)^\text{T}$ with $Z^1, \dots , Z^d$ i.i.d. $\mathcal{N}(0,1)$-distributed random variables and $\mathbf{A}$ is a $d\times d$ matrix such that $\mathbf{A}\mathbf{A}^\text{T}= \bm{\sigma}^2/(2p-1)$ is the covariance matrix. We then conclude from the convergence in distribution (\ref{conv:Distributional}) that as $n \to \infty$ \begin{align*}
\frac{\langle \mathbf{a}, \mathbf{\hat{S}}_n \rangle - \langle \mathbf{a}, a_n \mathbf{L} \rangle }{\sqrt{n}} \implies \langle \mathbf{a}, \mathbf{AZ} \rangle = \langle \mathbf{a} , \mathbf{N} \rangle.
\end{align*}
By the Cramér-Wold theorem, see for example Theorem 29.4 on page 383 in
Billingsley \cite{Billingsley}, we conclude that 
\begin{align*}
\frac{\mathbf{ \hat{S}}_n - a_n \mathbf{ \hat{W}}}{\sqrt{n}}\implies \mathcal{N}_d(\bm{0}, \bm{\sigma}^2/(2p-1)),
\end{align*}
where we set $\mathbf{ \hat{W}}:= \mathbf{L}.$ Thus the proof of Theorem \ref{thm:MyMainResult} is now complete.
\section{An application: Reinforced empirical processes} \label{section:EmpiricalProcesses}
In this section we write $\mathbb{D}$ for the space of RCLL processes $\omega : [0,1] \to \mathbb{R}$ endowed with the Skorokhod topology (see Chapter 3 in \cite{Billingsley} or Chapter VI in \cite{Processes}). The notation $\Longrightarrow_\mathbb{D}$ is then reserved to indicate convergence in distribution of a sequence of processes in $\mathbb{D}$, we still use the notation $\implies$ to denote convergence in distribution as in Theorem \ref{thm:MyMainResult}.

Recall that if $U_1, U_2, \dots $ is a sequence of i.i.d. uniform random variables on the unit interval $[0,1]$; then we have the sequence of (uniform) empirical processes given by
\begin{align} \label{BridgeOne}
G_n(x):= \frac{1}{\sqrt{n}} \sum_{i=1}^n \left( \mathbf{1}_{U_i \leq x} - x \right), \quad x \in [0,1]. 
\end{align}
In 1952 Donsker \cite{Donsker} established that as $n$ tends to infinity, there is the convergence in the sense of Skorokhod 
\begin{align*}
\left( G_n(x) \right)_{x \in [0,1]} \Longrightarrow_\mathbb{D} \left( G(x) \right)_{x \in [0,1]}
\end{align*}
where $G$ denotes a Brownian bridge.

We consider $\hat{U}_1, \hat{U}_2, \dots$ to be the reinforced random variables associated to the sequence of i.i.d. uniform random variables $U_1, U_2, \dots$ on the interval $[0,1]$. We are chiefly interested in the empirical processes $\hat{G}_n$ associated to the reinforced sequence $(\hat{U}_n)_{n \geq 1}$, i.e. 
\begin{align} \label{BridgeTwo}
\hat{G}_n(x) = \frac{1}{\sqrt{n}} \sum_{i=1}^n \left( \mathbf{1}_{ \hat{U}_i \leq x} - x \right),  \quad x \in [0,1].
\end{align}
According to Theorem 2.1 in \cite{KallenbergCanonical}, the processes given by (\ref{BridgeOne}) and (\ref{BridgeTwo}) fall into the framework of bridges with exchangable increments, see Kallenberg \cite{KallenbergCanonical} and \cite{KallenbergPath} for more details. For our purpose we require the following bridge with exchangable increments as given by Definition 2.3 in \cite{BertoinLinear}; Let $p>1/2$, we define $B^{(p)}=(B^{(p)})_{x \in [0,1]}$ by
\begin{align} \label{BridgeBertoin}
B^{(p)}(x)= \sum_{j=1}^\infty X_j^{(p)} \left( \mathbf{1}_{U_j \leq x}-x \right), \quad x \in [0, 1],
\end{align}
where $(U_j)_{j \geq 1}$ is a sequence of i.i.d. uniform random variables on $[0,1]$, independent of $\mathbf{X}^{(p)}=(X_j^{(p)})_{j \geq 1}$. The sequence $\mathbf{X}^{(p)}$ arises as limits of $N_j(n)= \#\{ k \leq n : \hat{U}_k= U_j\}$, i.e. the occurrences of the variable $U_j$ up to the $n$-th step, see Lemma 2.2 in \cite{BertoinLinear} for details.

In a recent article, Bertoin \cite{BertoinLinear} studied how linear reinforcement affects empirical processes as displayed in (\ref{BridgeTwo}). Theorem 1.2 in \cite{BertoinLinear} gives functional limit theorems for all regimes $p \in (0,1)$.  Notably, for the superdiffusive regime $p>1/2$, we have 

\begin{align} \label{conv:ProbabilityinD}
\lim_{n \to \infty} n^{-p+1/2} \hat{G}_n = B^{(p)} \quad \text{in probability in } \mathbb{D}.
\end{align}
The convergence displayed in (\ref{conv:ProbabilityinD}) makes it plausible to investigate second order weak limit theorems. As an application of Theorem \ref{thm:MyMainResult} we  establish the following refinement of (\ref{conv:ProbabilityinD}):

\begin{cor} \label{thm:MySecondMainResult} Let $p \in (1/2,1)$. Then we have the convergence in the sense of Skorokhod in $\mathbb{D}$ as $n \to \infty$ 
\begin{align*}
\left(  \hat{G}_n(x) - a_n n^{-1/2} B^{(p)}(x)  \right)_{x \in [0,1]} \Longrightarrow_\mathbb{D} \frac{1}{\sqrt{2p-1}} \left( G(x) \right)_{x \in [0,1]}
\end{align*}
where $G=(G(x))_{x \in [0,1]}$ is a Brownian bridge and $ ( B^{(p)}(x))_{x \in [0,1]}$ is the bridge with exchangeable increments given by (\ref{BridgeBertoin}).
\end{cor}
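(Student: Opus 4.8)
The plan is to derive Corollary~\ref{thm:MySecondMainResult} from Theorem~\ref{thm:MyMainResult} via convergence of the finite‑dimensional distributions together with tightness in $\mathbb{D}$. The starting point is that, for a fixed level $x\in[0,1]$, the partial sums $\hat{S}_n^{(x)}:=\sqrt{n}\,\hat{G}_n(x)=\sum_{i=1}^n(\mathbf{1}_{\hat{U}_i\le x}-x)$ form a one‑dimensional step‑reinforced random walk with typical step $\mathbf{1}_{U\le x}-x$, where $U$ is uniform on $[0,1]$: the reinforcement algorithm acts on the variables $\hat{U}_i$ themselves, so $\hat{U}_i=\hat{U}_{U[i-1]}$ forces $\mathbf{1}_{\hat{U}_i\le x}=\mathbf{1}_{\hat{U}_{U[i-1]}\le x}$ with the same Bernoulli and uniform draws. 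More generally, for $0\le x_1<\cdots<x_k\le 1$ the vector $(\hat{S}_n^{(x_1)},\dots,\hat{S}_n^{(x_k)})^{\text{T}}$ is a $k$‑dimensional step‑reinforced random walk whose typical step $(\mathbf{1}_{U\le x_1}-x_1,\dots,\mathbf{1}_{U\le x_k}-x_k)^{\text{T}}$ is centred with covariance matrix $\bm{\sigma}^2=(x_i\wedge x_j-x_ix_j)_{1\le i,j\le k}$, precisely the covariance of a Brownian bridge $G$ sampled at $x_1,\dots,x_k$. Theorem~\ref{thm:MyMainResult} therefore gives $n^{-1/2}\big((\hat{S}_n^{(x_1)},\dots,\hat{S}_n^{(x_k)})^{\text{T}}-a_n\hat{\mathbf{W}}\big)\implies\mathcal{N}_k(\mathbf{0},\bm{\sigma}^2/(2p-1))$, the law of $(2p-1)^{-1/2}(G(x_1),\dots,G(x_k))$, where $\hat{\mathbf{W}}$ has components $\hat{W}^{(x_j)}=\lim_n\hat{S}_n^{(x_j)}/a_n$. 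Using the identity $\hat{S}_n^{(x)}=\sum_j N_j(n)(\mathbf{1}_{U_j\le x}-x)$ (valid because $\sum_j N_j(n)=n$) together with $N_j(n)/a_n\to X_j^{(p)}$ (cf.\ the construction in~\cite{BertoinLinear}), a routine $L^2$ argument --- resting on $\sum_j\EE[(X_j^{(p)})^2]<\infty$ and on the monotone convergence $\EE[(N_j(n)/a_n)^2]\uparrow\EE[(X_j^{(p)})^2]$ --- identifies $\hat{W}^{(x)}=\sum_j X_j^{(p)}(\mathbf{1}_{U_j\le x}-x)=B^{(p)}(x)$. Hence the finite‑dimensional distributions of $H_n:=\hat{G}_n-a_n n^{-1/2}B^{(p)}$ converge to those of $(2p-1)^{-1/2}G$.

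It remains to prove tightness of $(H_n)_{n\ge 1}$ in $\mathbb{D}$. Combining the two series representations, $H_n(x)=\sum_j\beta_j^{(n)}(\mathbf{1}_{U_j\le x}-x)$ with $\beta_j^{(n)}:=n^{-1/2}(N_j(n)-a_nX_j^{(p)})$; since the innovation values $(U_j)_j$ are i.i.d.\ uniform and independent of the reinforcement mechanism, hence of $(\beta_j^{(n)})_j$, the process $H_n$ is a bridge with exchangeable increments in the sense of Kallenberg~\cite{KallenbergCanonical}, with no Gaussian component. By Kallenberg's convergence criterion, $H_n\Longrightarrow_{\mathbb{D}}(2p-1)^{-1/2}G$ reduces to the two conditions, both in probability, (i)~$\sum_j(\beta_j^{(n)})^2\to(2p-1)^{-1}$ and (ii)~$\max_j|\beta_j^{(n)}|\to 0$. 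For (i), note that conditionally on the reinforcement mechanism $H_n$ is a \emph{deterministically} weighted uniform empirical bridge, so $\EE\big[(\hat{S}_n^{(x)}-a_nB^{(p)}(x))^2\mid\text{mechanism}\big]=x(1-x)\sum_j(N_j(n)-a_nX_j^{(p)})^2$; taking expectations and inserting $\EE[(\hat{S}_n^{(x)}-a_n\hat{W}^{(x)})^2]=a_n^2 s_{n+1}^2\sim x(1-x)\,n/(2p-1)$ from the proof of Theorem~\ref{thm:MyMainResult} (cf.\ (\ref{eq:bound})) yields $\EE\big[n^{-1}\sum_j(N_j(n)-a_nX_j^{(p)})^2\big]\to(2p-1)^{-1}$, which one upgrades to convergence in probability through a second‑moment bound on $\sum_j(N_j(n)-a_nX_j^{(p)})^2$. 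For (ii), $\EE[(N_j(n)-a_nX_j^{(p)})^2]$ is largest for the earliest innovation indices, where it is of order $n^p=o(n)$, while $\sum_j\EE[(N_j(n)-a_nX_j^{(p)})^2]\asymp n$, so that $\max_j|N_j(n)-a_nX_j^{(p)}|=o_{\PP}(\sqrt{n})$.

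The main obstacle is the probabilistic part of condition~(i): upgrading the first‑moment asymptotics of $n^{-1}\sum_j(N_j(n)-a_nX_j^{(p)})^2$ --- which the $L^2$‑estimates already established for Theorem~\ref{thm:MyMainResult} hand over directly --- to convergence in probability requires a genuine variance estimate, which I would obtain by exploiting the P\'olya‑urn dynamics of the individual counts $N_j(\cdot)$ (each $N_j(m)/a_m$ being itself a martingale converging to $X_j^{(p)}$, so that $N_j(n)-a_nX_j^{(p)}$ is a weighted martingale tail) together with the weak dependence between innovations introduced far apart in time. Everything else is routine: the steps here are bounded, so no truncation step is needed, and the identification $\hat{W}^{(x)}=B^{(p)}(x)$ and condition~(ii) are elementary. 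As an alternative, more hands‑on route to tightness that bypasses Kallenberg's criterion: writing $H_n(b)-H_n(a)=-n^{-1/2}a_n\sum_{k>n}d_k^{(a,b)}$ for the bounded martingale increments of the step‑reinforced walk with typical step $\mathbf{1}_{a<U\le b}-(b-a)$, a Burkholder--Rosenthal inequality combined with the estimates from the proof of Theorem~\ref{thm:MyMainResult} gives $\EE[(H_n(b)-H_n(a))^4]\le C(b-a)^2+C(b-a)\rho_n$ with $\rho_n\to 0$ --- the same shape as for the classical uniform empirical process --- from which tightness in $\mathbb{D}$ follows by the argument used to establish Donsker's invariance principle, the range $b-a\lesssim 1/n$ being handled separately; see~\cite{Billingsley}.
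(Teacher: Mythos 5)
Your finite-dimensional part is essentially the paper's argument: you apply Theorem \ref{thm:MyMainResult} to the $k$-dimensional step-reinforced walk with steps $(\mathbf{1}_{\hat U_i\le x_1}-x_1,\dots,\mathbf{1}_{\hat U_i\le x_k}-x_k)$, note that its covariance matrix is that of the Brownian bridge, and identify the a.s.\ limit $\mathbf{\hat W}$ with $(B^{(p)}(x_1),\dots,B^{(p)}(x_k))$. For that identification the paper simply invokes Theorem 1.2 iii) of \cite{BertoinLinear}, which already gives $n^{-p}\mathbf{\hat S}_n\to(B^{(p)}(x_j))_j$ in probability, so the almost sure limit furnished by Theorem \ref{thm:MyMainResult} must coincide with it; your $L^2$ computation with the occupation counts $N_j(n)$ is a more laborious detour and is itself only sketched, though plausibly repairable.

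The genuine gap is in the passage from finite-dimensional convergence to convergence in $\mathbb{D}$. The paper proves no tightness at all: since $\hat G_n-a_n n^{-1/2}B^{(p)}$ is itself a bridge with exchangeable increments (Theorem 2.1 in \cite{KallenbergCanonical}), Kallenberg's Theorem 2.3 together with conditions (C) and (D) shows that, within this class, convergence of the finite-dimensional distributions with the limit identified already yields convergence in the Skorokhod sense. You instead set out to verify Kallenberg's canonical-characteristic conditions directly, namely $\sum_j(\beta_j^{(n)})^2\to(2p-1)^{-1}$ and $\max_j|\beta_j^{(n)}|\to 0$ in probability, and you concede that the first of these --- upgrading the first-moment asymptotics of $n^{-1}\sum_j\bigl(N_j(n)-a_nX_j^{(p)}\bigr)^2$ to convergence in probability --- requires a variance or concentration estimate for the urn counts that you do not carry out; the promised argument via ``P\'olya-urn dynamics and weak dependence'' is precisely the hard part and is left open. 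Your condition (ii) is also not actually proved: from $\max_j\EE\bigl[(N_j(n)-a_nX_j^{(p)})^2\bigr]=O(n^p)$ and $\sum_j\EE\bigl[(N_j(n)-a_nX_j^{(p)})^2\bigr]\asymp n$ a union bound only gives $\PP\bigl(\max_j|N_j(n)-a_nX_j^{(p)}|>\varepsilon\sqrt n\bigr)=O(1)$, so the stated $o_\PP(\sqrt n)$ bound does not follow from these moments alone. The alternative Burkholder--Rosenthal chaining route is likewise only asserted, and the claimed fourth-moment bound uniform in $a,b$ is not derived (the summands of $H_n$ are strongly dependent through the reinforcement). So as written your proposal establishes the finite-dimensional statement but not the functional one; the simplest repair is the paper's own observation that the prelimit process has exchangeable increments, so Kallenberg's theorem converts finite-dimensional convergence into $\mathbb{D}$-convergence without any tightness estimate.
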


\begin{proof}
We observe, with an appeal to Theorem 2.1 in \cite{KallenbergCanonical}, that for $x \in [0,1]$ the process 
\begin{align} \label{MyBridge}
\hat{G}_n(x)- a_n n^{-1/2} B^{(p)}(x)
\end{align} is again a bridge with exchangable increments. For the framework of bridges with exchangable increments, Theorem 2.3 and conditions (C) and (D) in \cite{KallenbergCanonical} shows that in order to establish the convergence in the sense of Skorokhod in $\mathbb{D}$ as dictated by Corollary \ref{thm:MySecondMainResult}, it suffices to show the convergence of the finite-dimensional distributions of (\ref{MyBridge}) and characterise the limiting random vector.

For $k \in \mathbb{N}$ let $x_1, \dots ,  x_k \in [0, 1]$ such that $x_1 \leq \dots \leq x_k$ and let $U_1, U_2, \dots $ be a sequence of i.i.d. copies of a uniform random variable on $[0,1]$ denoted by $U$. We consider for $i=1, \dots , n$ the $k$-dimensional steps given by
\begin{align*}
\mathbf{ X}_i := \left( \mathbf{1}_{ U_i \leq x_1}-x_1, \dots , \mathbf{1}_{ U_i \leq x_k} - x_k \right).
\end{align*} Plainly $\mathbf{X}_1, \mathbf{X}_2, \dots$ is a sequence of i.i.d. copies of the $\mathbb{R}^k$-valued random vector 
\begin{align*}
\mathbf{X} = \left( \mathbf{1}_{ U \leq x_1}-x_1, \dots , \mathbf{1}_{ U \leq x_k} - x_k \right).
\end{align*}
We observe that $\mathbf{X} \in L^2(\PP)$ with $\EE( \mathbf{X})= \mathbf{0}$ and we shall denote by $\bm{\sigma}^2= \EE( \mathbf{X} \mathbf{X}^\text{T})$ the covariance matrix of $\mathbf{X}$. Since we assume that $x_1  \leq \dots \leq x_k$ one easily verifies that the entries of the covariance matrix $\bm{\sigma}^2$ are given by 
\begin{align} \label{covarianceStructure}
\sigma^2_{i,j} = \begin{cases}  x_i(1-x_j) & \text{if }1 \leq i \leq j \leq k, \\
x_j(1-x_i) & \text{if } 1 \leq j < i \leq k 
\end{cases}.
\end{align} We consider the reinforced sequence $\hat{U}_1, \hat{U}_2, \dots$ and associated steps 
\begin{align*}
\mathbf{ \hat{X}}_i := \left( \mathbf{1}_{ \hat{U}_i \leq x_1}-x_1, \dots , \mathbf{1}_{ \hat{U}_i \leq x_k} - x_k \right)
\end{align*}
and we set
\begin{align*}
\mathbf{ \hat{S}}_n = \mathbf{ \hat{X}}_1 + \dots + \mathbf{ \hat{X}}_n.
\end{align*}
The process $( \mathbf{ \hat{S}}_n)_{n \in \mathbb{N}}$ is then a $k$-dimensional step-reinforced random walk. Since $p>1/2$, we have thanks to Theorem 1.2 iii) in \cite{BertoinLinear} the convergence in probability of the finite-dimensional distributions as $n$ tends to infinity
\begin{align*}
n^{1/2-p} \left( \hat{G}_n(x_1), \dots \hat{G}_n(x_k) \right) = n^{1/2-p} \frac{1}{\sqrt{n}} \mathbf{ \hat{S}}_n = \frac{\mathbf{ \hat{S}}_n}{n^p} \\
\to \left( B^{(p)}(x_1), \dots , B^{(p)}(x_k) \right).
\end{align*}
Moreover, by Theorem \ref{thm:MyMainResult} we obtain the convergence in distribution as $n$ tends to infinity 
\begin{align*}
\frac{\mathbf{ \hat{S}}_n -a_n\left( B^{(p)}(x_1), \dots , B^{(p)}(x_k) \right) }{\sqrt{n}} \implies \frac{1}{\sqrt{2p-1}} \mathcal{N}_d(\bm{0}, \bm{\sigma}^2),
\end{align*}
where we identified $\mathbf{\hat{W}}= \left( B^{(p)}(x_1), \dots , B^{(p)}(x_k) \right) $, or equivalently
\begin{align*}
\left( \hat{G}_n(x_1)- a_n n^{-1/2} B^{(p)}(x_1), \dots , \hat{G}_n(x_k)- a_n n^{-1/2} B^{(p)}(x_k) \right) \\ \implies \frac{1}{\sqrt{2p-1}} \mathcal{N}_d(\bm{0}, \bm{\sigma}^2). 
\end{align*}
Thus we have established the convergence of the finite-dimensional distributions to a Gaussian process and further identified its covariance structure via (\ref{covarianceStructure}). Since the covariance structure agrees with the one of a Brownian bridge this concludes the proof. 
\end{proof}
\section*{Acknowledgements}
I would like to express my gratitude to both of my supervisors, Jean Bertoin and Erich Baur, for their patience and valuable input while I was writing this article.
\bibliographystyle{plainurl}
\bibliography{bib}

\begin{thebibliography}{10}

\bibitem{BaurClass}
Erich Baur.
\newblock On a class of random walks with reinforced memory.
\newblock {\em Journal of Statistical Physics}, 181(3):772--802, 2020.
\newblock \href {https://doi.org/10.1007/s10955-020-02602-3}
  {\path{doi:10.1007/s10955-020-02602-3}}.

\bibitem{BaurBertoin}
Erich Baur and Jean Bertoin.
\newblock Elephant random walks and their connection to {P}\'olya-type urns.
\newblock {\em Phys. Rev. E}, 94, 2016.
\newblock \href {https://doi.org/10.1103/PhysRevE.94.052134}
  {\path{doi:10.1103/PhysRevE.94.052134}}.

\bibitem{Bercu}
Bernard Bercu.
\newblock A martingale approach for the elephant random walk.
\newblock {\em Journal of Physics A: Mathematical and Theoretical}, 51(1),
  2017.
\newblock \href {https://doi.org/10.1088/1751-8121/aa95a6}
  {\path{doi:10.1088/1751-8121/aa95a6}}.

\bibitem{BercuLaulinCenter}
Bernard Bercu and Lucile Laulin.
\newblock On the center of mass of the elephant random walk.
\newblock {\em Stochastic Processes and their Applications}, 2020.
\newblock \href {https://doi.org/10.1016/j.spa.2020.11.004}
  {\path{doi:10.1016/j.spa.2020.11.004}}.

\bibitem{Bertenghi}
Marco Bertenghi.
\newblock Functional limit theorems for the multi-dimensional elephant random
  walk.
\newblock 2020.
\newblock \href {http://arxiv.org/abs/2004.02004} {\path{arXiv:2004.02004}}.

\bibitem{BertoinLinear}
Jean Bertoin.
\newblock How linear reinforcement affects {D}onsker's theorem for empirical
  processes.
\newblock {\em Probability Theory and Related Fields}, 178:1173--1192, 2020.

\bibitem{BertoinUniversality}
Jean Bertoin.
\newblock Universality of noise reinforced {B}rownian motions.
\newblock {\em Arxiv Preprints}, 2020.
\newblock URL: \url{https://hal.archives-ouvertes.fr/hal-02341310}.

\bibitem{Billingsley}
Patrick Billingsley.
\newblock {\em Probability and measure}.
\newblock John Wiley \& Sons, 2008.

\bibitem{GavaSchuetzColetti}
Cristian~F. Coletti, Renato Gava, and Gunter~M. Schütz.
\newblock Central limit theorem and related results for the elephant random
  walk.
\newblock {\em Journal of Mathematical Physics}, 58, 2017.
\newblock \href {https://doi.org/10.1063/1.4983566}
  {\path{doi:10.1063/1.4983566}}.

\bibitem{Coletti2017}
Cristian~F Coletti, Renato Gava, and Gunter~M Schütz.
\newblock A strong invariance principle for the elephant random walk.
\newblock {\em Journal of Statistical Mechanics: Theory and Experiment},
  2017(12):123207, dec 2017.
\newblock \href {https://doi.org/10.1088/1742-5468/aa9680}
  {\path{doi:10.1088/1742-5468/aa9680}}.

\bibitem{Coletti2019}
Cristian~F. Coletti and Ioannis Papageorgiou.
\newblock Asymptotic analysis of the elephant random walk.
\newblock 2019.
\newblock \href {http://arxiv.org/abs/1910.03142} {\path{arXiv:1910.03142}}.

\bibitem{Donsker}
Monroe~D Donsker.
\newblock Justification and extension of {D}oob's heuristic approach to the
  {K}olmogorov-{S}mirnov theorems.
\newblock {\em The Annals of mathematical statistics}, pages 277--281, 1952.
\newblock \href {https://doi.org/10.1214/aoms/1177729445}
  {\path{doi:10.1214/aoms/1177729445}}.

\bibitem{GonzalesMult}
Manuel González-Navarrete.
\newblock Multidimensional walks with random tendency.
\newblock {\em Journal of Statistical Physics volume}, 181:1138--1148, 2020.
\newblock \href {https://doi.org/10.1007/s10955-020-02621-0}
  {\path{doi:10.1007/s10955-020-02621-0}}.

\bibitem{GuevaraERW}
V{\i}ctor Hugo~V{\'a}zquez Guevara and Hugo~Cruz Su{\'a}rez.
\newblock An elephant random walk based strategy for improving learning.

\bibitem{GuevaraMinimal}
V{\i}ctor Hugo~V{\'a}zquez Guevara and Hugo~Cruz Su{\'a}rez.
\newblock A strategy to improve learning via a minimal random walk.

\bibitem{Heyde}
Christopher~C. Heyde.
\newblock On central limit and iterated logarithm supplements to the martingale
  convergence theorem.
\newblock {\em Selected Works in Probability and Statistics}, 2010.
\newblock \href {https://doi.org/10.1007/978-1-4419-5823-5_42}
  {\path{doi:10.1007/978-1-4419-5823-5_42}}.

\bibitem{Processes}
Jean Jacod and Albert~N. Shiryaev.
\newblock {\em Limit Theorems for Stochastic Processes}.
\newblock Springer, 2003.
\newblock \href {https://doi.org/10.1007/978-3-662-05265-5}
  {\path{doi:10.1007/978-3-662-05265-5}}.

\bibitem{KallenbergCanonical}
Olav Kallenberg.
\newblock Canonical representations and convergence criteria for processes with
  interchangeable increments.
\newblock {\em Zeitschrift f{\"u}r Wahrscheinlichkeitstheorie und Verwandte
  Gebiete}, 27(1):23--36, 1973.
\newblock \href {https://doi.org/10.1007/BF00736005}
  {\path{doi:10.1007/BF00736005}}.

\bibitem{KallenbergPath}
Olav Kallenberg.
\newblock Path properties of processes with independent and interchangeable
  increments.
\newblock {\em Zeitschrift f{\"u}r Wahrscheinlichkeitstheorie und Verwandte
  Gebiete}, 28(4):257--271, 1974.
\newblock \href {https://doi.org/10.1007/BF00532944}
  {\path{doi:10.1007/BF00532944}}.

\bibitem{KubotaTakei}
Naoki Kubota and Masato Takei.
\newblock Gaussian fluctuation for superdiffusive elephant random walks.
\newblock {\em Journal of Statistical Physics 177}, pages 1157--1171, 2019.
\newblock \href {https://doi.org/10.1007/s10955-019-02414-0}
  {\path{doi:10.1007/s10955-019-02414-0}}.

\bibitem{Kuersten}
R\"udiger K\"ursten.
\newblock Random recursive trees and the elephant random walk.
\newblock {\em Phys. Rev. E}, 93:032111, Mar 2016.
\newblock \href {https://doi.org/10.1103/PhysRevE.93.032111}
  {\path{doi:10.1103/PhysRevE.93.032111}}.

\bibitem{SchuetzTrimper}
Gunter~M. Sch\"utz and Steffen Trimper.
\newblock Elephants can always remember: Exact long-range memory effects in a
  non-markovian random walk.
\newblock {\em Phys. Rev. E}, 70, 2004.
\newblock \href {https://doi.org/10.1103/PhysRevE.70.045101}
  {\path{doi:10.1103/PhysRevE.70.045101}}.

\end{thebibliography}
\end{document}